\newtheorem{theorem}{Theorem} [section]
\newtheorem{lemma}[theorem]{Lemma}
\newtheorem{proposition}[theorem]{Proposition}
\newtheorem{remark}[theorem]{Remark}
\DeclareMathOperator*{\supp}{supp}
\newcommand{\I}{\hspace{0.5mm}\text{I}\hspace{0.5mm}}
\newcommand{\II}{\text{I \hspace{-2.8mm} I} }
\newcommand{\noi}{\noindent}
\newcommand{\Z}{\mathbb{Z}}
\newcommand{\R}{\mathbb{R}}
\newcommand{\T}{\mathbb{T}}
\let\Re=\undefined\DeclareMathOperator*{\Re}{Re}
\let\Im=\undefined\DeclareMathOperator*{\Im}{Im}
\let\P= \undefined
\newcommand{\P}{\mathbf{P}}
\newcommand{\E}{\mathbb{E}}
\newcommand{\F}{\mathcal{F}}
\newcommand{\al}{\alpha}
\newcommand{\be}{\beta}
\newcommand{\dl}{\delta}
\newcommand{\nb}{\nabla}
\newcommand{\Dl}{\Delta}
\newcommand{\eps}{\varepsilon}
\newcommand{\g}{\gamma}
\newcommand{\ld}{\lambda}
\newcommand{\Si}{\Sigma}
\newcommand{\ft}{\widehat}
\newcommand{\wt}{\widetilde}
\newcommand{\cj}{\overline}
\newcommand{\dt}{\partial_t}
\renewcommand{\l}{\ell}
\renewcommand{\o}{\omega}
\renewcommand{\O}{\Omega}
\newcommand{\les}{\lesssim}
\newcommand{\ges}{\gtrsim}
\newcommand{\jb}[1]
{\langle #1 \rangle}
\renewcommand{\S}{\mathcal{S}}
\newcommand{\pa}{\partial}
\newtheorem*{ackno}{Acknowledgement}
\newcommand{\bu}{{\bf u}}
\newcommand{\bz}{{\bf z}}
\newcommand{\bv}{{\bf v}}
\newcommand{\bZ}{{\bf Z}}
\numberwithin{equation}{section}
\numberwithin{theorem}{section}
\newcommand{\et}{\eta_{_T}}
\newcommand{\fet}{\wt \eta_{_T}}
\renewcommand{\H}{\mathcal{H}}
\newsavebox{\foobox}
\newcommand{\slantbox}[2][0]{\mbox{%
        \sbox{\foobox}{#2}%
        \hskip\wd\foobox
        \pdfsave
        \pdfsetmatrix{1 0 #1 1}%
        \llap{\usebox{\foobox}}%
        \pdfrestore
}}
\newcommand\unslant[2][-.25]{\slantbox[#1]{$#2$}}
\newcommand{\pphi}{\hspace{1pt}\pmb{\unslant\phi}}
\begin{document}
\baselineskip = 14pt


\title[Almost sure  GWP of the periodic energy-critical NLW]
{A remark on almost sure global well-posedness of the energy-critical
defocusing  nonlinear wave equations
in the periodic setting}

\author[T.~Oh and O.~Pocovnicu]
{Tadahiro Oh and   Oana Pocovnicu}

\address{
Tadahiro Oh\\
School of Mathematics\\
The University of Edinburgh\\
and The Maxwell Institute for the Mathematical Sciences\\
James Clerk Maxwell Building\\
The King's Buildings\\
Peter Guthrie Tait Road\\
Edinburgh\\ 
EH9 3FD\\
 United Kingdom}
 
\email{hiro.oh@ed.ac.uk}

\address{
Oana Pocovnicu\\
Department of Mathematics\\
Princeton University\\
Fine Hall\\
Washington Rd.\\
Princeton\\ NJ 08544-1000\\USA\\
and 
Department of Mathematics\\
Heriot-Watt University
and The Maxwell Institute for the Mathematical Sciences\\
Edinburgh\\
EH14 4AS\\United Kingdom}

\email{opocovnicu@math.princeton.edu}

\subjclass[2010]{35L05, 35L71}

\keywords{nonlinear wave equation; probabilistic well-posedness; 
almost sure global existence; finite speed of propagation}

\begin{abstract}
In this note, 
we prove almost sure  global well-posedness of the energy-critical defocusing nonlinear wave equation on $\T^d$,
$d = 3, 4,$ and $ 5$, 
with random initial data below the energy space.

\end{abstract}

\maketitle

%

\date{\today}

%
%

\baselineskip = 15pt

\section{Introduction}\label{SEC:intro}

\subsection{Energy-critical nonlinear wave equations}

We consider the Cauchy problem for the energy-critical defocusing nonlinear wave equation (NLW) on 
the $d$-dimensional torus $\T^d =(\R/2\pi\Z)^d$, $d=3, 4$ or $5$:
\begin{equation}\label{NLW}
\begin{cases}
\pa_{t}^2 u-\Delta u+|u|^{\frac{4}{d-2}}u =0 
\\
(u,   \pa_t u)\big|_{t = 0} = (u_0, u_1), 
\end{cases}
\quad \quad (t,x)\in\R\times\T^d, 
\end{equation} 

\noi
where 
$u$ is a real-valued function on $\R\times\T^d$.
In particular, we prove
almost sure global well-posedness of \eqref{NLW}
with randomized initial data 
 below the energy space.

NLW on the Euclidean space $\R^d$ has been studied extensively from both applied and theoretical points of view.
Due to its analytical difficulty, 
the energy-critical defocusing NLW \eqref{NLW} on $\R^d$ has attracted
a tremendous amount of attention 
over the last few decades. 
After substantial efforts  by many mathematicians, 
it is  known that
\eqref{NLW} on $\R^d$ is globally well-posed in the energy space
and all finite energy solutions scatter
 \cite{ Struwe, Grillakis90, Grillakis92, Shatah_Struwe93, 
Shatah_Struwe, Kapitanski, 
Ginibre, Bahouri_Shatah,
Bahouri_Gerard, 
Nakanishi1999,
Nakanishi_scattering,
Tao}.
Thanks to the finite speed of propagation,  
these global well-posedness results of \eqref{NLW} on $\R^d$ in the energy space
immediately yield 
the corresponding 
 global well-posedness of \eqref{NLW} on $\T^d$ in the energy space.
We point out that these well-posedness results in the energy space
are sharp in the sense that 
the energy-critical  NLW
\eqref{NLW} on $\R^d$ is known to be ill-posed
below the energy space
\cite{Christ_Colliander_Tao_main}.

In recent years, there has been a significant development
in incorporating non-deterministic points of view in 
the study of the Cauchy problems for 
hyperbolic and dispersive PDEs 
below certain regularity thresholds, in particular  a scaling critical regularity.
For example,  the methodology developed in \cite{Bourgain96, BTI, BOP1, Poc}
readily yields almost sure local well-posedness of \eqref{NLW} 
with respect to randomized initial data below the energy space. 
There are also results on almost sure global well-posedness
that go beyond the deterministic thresholds.
Burq-Tzvetkov \cite{BT3} considered the energy-subcritical 
defocusing cubic NLW on $\T^3$ and established 
almost sure global well-posedness below the scaling critical regularity.
Subsequently, L\"uhrmann-Mendelson \cite{LM} 
applied the probabilistic high-low method developed in \cite{Colliand_Oh}
and proved almost sure global well-posedness for some energy-subcritical NLW on $\R^3$
below the scaling critical regularity.
See \cite{LM2} for a recent improvement on this work.\footnote{There is also a recent work 
by Sun-Xia \cite{SX}
on almost sure global well-posedness for some energy-subcritical NLW on $\T^3$.}
More recently, 
the authors  \cite{Poc, OP}
incorporated the deterministic energy-critical theory
and proved almost sure global well-posedness below the energy space
of the energy-critical defocusing NLW \eqref{NLW} on $\R^d$, $d = 3, 4,$ and $ 5$.
Our main goal in this paper is to 
consider
the energy-critical defocusing NLW
\eqref{NLW} on $\T^d$ in the probabilistic setting 
and prove almost sure global well-posedness 
below the energy space.
In the classical deterministic setting, 
the finite speed of propagation immediately allows us to transfer
a deterministic global well-posedness result of NLW on $\R^d$
to the 
corresponding  deterministic global well-posedness result  on $\T^d$.
This finite speed of propagation also plays
an important role in our probabilistic setting.
As we see below, however, 
the probabilistic results on $\R^d$ in \cite{Poc, OP} 
are not directly transferrable to the periodic setting
and some care must be taken.

\subsection{Main result}

The energy-critical NLW  \eqref{NLW} on $\R^d$ is known to enjoy the following dilation symmetry:
$u(t,x)\mapsto u_{\lambda}(t,x):=\lambda^{\frac{d-2}{2}} u(\lambda t, \lambda x)$.
Namely, if $u$ is a solution to \eqref{NLW} on $\R^d$,
then $u_{\lambda}$ is also a solution to \eqref{NLW} on  $\R^d$ with rescaled initial data.
It is easy to check that 
the $\dot{H}^1(\R^d)\times L^2(\R^d)$-norm
and 
the   conserved energy $E(u)$ defined by 
\begin{equation*}
E(u) = E(u, \pa_t u) := \int \frac 12(\pa_t u)^2+\frac 12|\nabla u|^2+
\frac{d-2}{2d}
|u|^\frac{2d}{d-2} dx
\end{equation*} 

\noi
are invariant under 
this dilation symmetry.
Note that by Sobolev's inequality, 
$E(u, \dt u) < \infty$
if and only if $(u, \dt u)\in \dot{H}^1(\R^d)\times L^2(\R^d)$.
For this reason, 
the space $\dot{H}^1(\R^d)\times L^2(\R^d)$  is called the energy space.
While there is no dilation symmetry on $\T^d$, 
we still refer to $H^1(\T^d)\times L^2(\T^d)$
as the energy space for \eqref{NLW} posed on $\T^d$.

Our main goal is to prove almost sure global well-posedness
of \eqref{NLW} on $\T^d$ below the energy space.
We use the following shorthand notation
for products of  Sobolev spaces:
\[\mathcal{H}^s(M) : = H^s(M)\times H^{s-1}(M), 
\]

\noi
where $M = \T^d$ or $\R^d$.

Given $s < 1$, fix a pair $(u_0, u_1) \in \H^s(\T^d)$ of  real valued functions. 
In terms of the Fourier series, we have
\[ u_j(x) = \sum_{n \in \Z^d} \ft{u}_j(n) e^{in \cdot x}, \quad j = 0, 1, \]

\noi
such that $\ft u_j(-n) = \cj{\ft u_j(n)}$.
We introduce a randomization $(u_0^\o, u_1^\o)$ of $(u_0, u_1)$ as follows.
For $j = 0, 1$, 
let $\{g_{n,j}\}_{n \in \Z^d}$ be a sequence of mean zero complex-valued random variables
on a probability space $(\Omega, \mathcal{F}, P)$
such that $g_{-n,j}=\cj{g_{n,j}}$
for all $n\in\Z^d$, $j=0,1$.
In particular,  $g_{0, j}$ is real-valued.
Moreover, we  assume that 
$\{g_{0,j}, \Re g_{n,j}, \Im g_{n,j}\}_{n\in\mathcal I, j=0,1}$ are independent,
where the index set $\mathcal{I}$ is defined by 
\begin{equation}
\mathcal I:=\bigcup_{k=0}^{d-1} \Z^k\times \Z_{+}\times \{0\}^{d-k-1}.
\label{Index}
\end{equation}

\noi
Note that $\Z^d = \mathcal I \cup (-\mathcal I)\cup \{0\}$.
Then, we define the randomization $(u_0^\o, u_1^\o)$
of $(u_0, u_1)$ by 
\begin{align}
(u_0^\o, u_1^\o) :=
\bigg(\sum_{n \in \Z^d} g_{n, 0} \ft{u}_0(n) e^{in \cdot x},
 \sum_{n \in \Z^d} g_{n, 1} \ft{u}_1(n) e^{in \cdot x}\bigg).
\label{R1}
\end{align}

\noi
In particular, if
$\{g_{0,j}, \Re g_{n,j}, \Im g_{n,j}\}_{n\in\mathcal I, j=0,1}$ are independent
standard complex-valued Gaussian random variables, 
then the randomization \eqref{R1}  corresponds
the white noise randomization:
$(u_0^\o, u_1^\o) = (\Xi_0* u_0, \Xi_1*u_1)$, 
where $\Xi_0$ and $\Xi_1$ are independent Gaussian white noise on $\T^d$.
See \cite{OP} for more on this.

In the following, we also make the following assumption on the 
 probability distributions
$\mu_{n,j}$ of $g_{n, j}$;
 there exists $c>0$ such that
\begin{equation}
\int e^{\gamma \cdot x}d\mu_{n,j}(x)\leq e^{c|\gamma|^2}, \quad j = 0, 1, 
\label{cond}
\end{equation}
	
\noindent
for all $n \in \Z^d$,  
(i) all $\gamma \in \R$ when $n = 0$,
and (ii)  all $\g \in \R^2$ when $n \in \Z^d \setminus \{0\}$.
Note that \eqref{cond} is satisfied by
standard complex-valued Gaussian random variables,
standard Bernoulli random variables,
and any random variables with compactly supported distributions.

Our main result reads as follows.

\begin{theorem}\label{THM:GWP}
For  $d = 3, 4,$ or 5,
let 
 $s \in \R$ satisfy
\begin{align*}
\textup{(i)} \  
\tfrac{1}{2} < s < 1
\text{ when } d = 3, 
\quad 
\textup{(ii)} \ 0< s< 1
\text{ when } d = 4, 
\quad 
\textup{(iii)} \ 0\leq s< 1
\text{ when } d = 5. 
\end{align*}

\noi
Given   $(u_0, u_1) \in \mathcal{H}^s(\T^d)$, 
let $(u_0^\omega, u_1^\omega)$
be the  randomization defined in \eqref{R1},
satisfying \eqref{cond}. 
Then, the energy-critical defocusing  NLW \eqref{NLW}
on $\T^d$ is almost surely globally well-posed.
More precisely,
there exists a set $ \O_{(u_0, u_1)}\subset \Omega $ of probability 1
such that, 
for  every $\o \in \O_{(u_0, u_1)}$, there exists a unique solution $u^\o$ to \eqref{NLW}
with $(u^\omega, \dt u^\omega)|_{t = 0} = (u_0^\omega, u_1^\omega)$
in the class:
\begin{align*}
\big(S_\textup{per}(t)(u_0^\omega, u_1^\omega), \dt S_\textup{per}(t)(u_0^\omega, u_1^\omega)\big)
+C(\R; \mathcal{H}^1(\T^d))
\subset C(\R; \mathcal{H}^s(\T^d)).
\end{align*}

\end{theorem}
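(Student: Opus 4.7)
The plan is to use the Da Prato--Debussche decomposition $u^\omega = z^\omega + v^\omega$, where $z^\omega(t) := S_{\text{per}}(t)(u_0^\omega, u_1^\omega)$ is the free random evolution on $\T^d$ and $v^\omega$ is the nonlinear residual solving
\begin{equation*}
\pa_t^2 v - \Delta v + F(v + z^\omega) = 0, \qquad (v, \dt v)|_{t=0} = (0,0),
\end{equation*}
with $F(w) := |w|^{\frac{4}{d-2}} w$. The goal is to produce $v^\omega \in C(\R; \H^1(\T^d))$ almost surely; combined with the obvious $z^\omega \in C(\R; \H^s(\T^d))$ (since the free wave flow preserves $\H^s$), this yields the stated conclusion.

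The first step is to derive almost sure probabilistic Strichartz estimates for $z^\omega$ on $\T^d$. Because the coefficients $\{g_{n,j}\}$ obey the exponential moment bound~\eqref{cond}, Khintchine/Wiener-chaos inequalities applied to the random Fourier series defining $z^\omega$ yield, with probability one,
\begin{equation*}
\|z^\omega\|_{L^q([-T,T]; W^{s',r}(\T^d))} < \infty
\end{equation*}
for every $T > 0$, a small regularity gain $s' > s$, and the energy-critical Strichartz-admissible pairs $(q,r)$ needed below. Compactness of $\T^d$ reduces matters to high-frequency Fourier-analytic control, so the estimates are essentially the same as on $\R^d$. Plugging these bounds into a standard contraction in $C([-T_0, T_0]; \H^1(\T^d))$, with $F(v + z^\omega) - F(v)$ treated as a perturbation, delivers almost sure local well-posedness of the $v$-equation on a random time interval, together with the usual blow-up criterion in terms of an energy-critical space-time norm of $v^\omega$.

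The central step is to upgrade this to global existence. Fix $T_\ast > 0$. The idea is to transfer the question to $\R^d$ via a cutoff and finite speed of propagation. Let $\chi \in C_c^\infty(\R^d)$ satisfy $\chi \equiv 1$ on $B(0, T_\ast + R_0)$ and $\supp \chi \subset B(0, T_\ast + 2R_0)$, where $R_0$ exceeds the diameter of $\T^d$. Set $(\tilde u_0^\omega, \tilde u_1^\omega) := \chi \cdot (u_0^\omega, u_1^\omega)^{\mathrm{per}}$, where $(\cdot)^{\mathrm{per}}$ denotes the $2\pi$-periodic lift to $\R^d$, and define $\tilde z^\omega(t) := S_{\R^d}(t)(\tilde u_0^\omega, \tilde u_1^\omega)$. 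By finite speed of propagation for the free wave equation, $\tilde z^\omega$ coincides with the periodic lift of $z^\omega$ on the slab $\{(t,x) \in [-T_\ast, T_\ast] \times \R^d : |x| \le T_\ast + R_0\}$, so the probabilistic Strichartz bounds from the first step transfer intact to $\tilde z^\omega$ on this slab.

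With these bounds on $\tilde z^\omega$ in hand, the deterministic energy-critical perturbation theory developed in \cite{Poc, OP} on $\R^d$---which takes as input only suitable Strichartz norms of the random linear piece, not its particular law---produces a global solution $\tilde v^\omega \in C(\R; \H^1(\R^d))$ of the corresponding forced NLW on $\R^d$ with zero initial data. Uniqueness together with finite speed of propagation for the nonlinear equation then forces the periodic lift of the local $\T^d$ solution $v^\omega$ to coincide with $\tilde v^\omega$ inside the backward cone emanating from $\{T_\ast\} \times \T^d$, yielding a uniform $\H^1(\T^d)$ a priori bound that rules out blow-up on $[-T_\ast, T_\ast]$. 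The main obstacle, as the authors flag in the introduction, is that the randomization on $\T^d$ does \emph{not} correspond under lifting to a randomization on $\R^d$ covered by \cite{Poc, OP}; the resolution is to perform the probabilistic analysis intrinsically on $\T^d$ (where compactness in fact simplifies matters) and to invoke the $\R^d$ energy-critical theory purely as a deterministic black box, with finite speed of propagation serving as the bridge.
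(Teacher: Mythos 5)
Your overall architecture (decompose into linear random part plus an $\H^1$ residual, transfer to $\R^d$ by cutting off the periodic lift, invoke the deterministic energy-critical perturbation theory of \cite{Poc, OP} as a black box, and come back via finite speed of propagation) is the same as the paper's. But there are two concrete gaps, and they sit exactly where the paper does its real work. First, the deterministic perturbation theory on $\R^d$ (and, for $d=4,5$, the Gronwall energy bound for $\bv^\o$) requires \emph{global-in-space} Strichartz control of the free evolution of the cutoff data, e.g.\ $\|\tilde z^\o\|_{L^q_t L^r_x(\R^d)}$ and even $\|\tilde z^\o\|_{L^1_t L^\infty_x(\R^d)}$. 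Finite speed of propagation only identifies $\tilde z^\o$ with the periodic lift of $z^\o$ inside a slab/light cone; outside that region (in particular on the transition annulus of your cutoff $\chi$, which the support of $\tilde z^\o(t)$ reaches), the periodic estimates give you nothing. So you cannot "transfer the bounds intact" and then feed them to the $\R^d$ machinery: you must prove probabilistic Strichartz estimates directly for $S(t)(\chi u_0^{\o,\mathrm{per}}, \chi u_1^{\o,\mathrm{per}})$ on all of $\R^d$. This is nontrivial precisely because $\widetilde{\chi}$ cannot have compact support (Paley--Wiener, Remark \ref{REM:PW}), so each frequency of the cutoff data mixes infinitely many $g_{n,j}$'s and the data is not of the Wiener-randomization form \eqref{RR1} covered by \cite{Poc, OP}. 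Propositions \ref{PROP:Str} and \ref{PROP:infty} (with Lemma \ref{LEM:Hs}) are the substitute, and your proposal has no counterpart for them; your remark that the probabilistic analysis is done "intrinsically on $\T^d$" misplaces the difficulty, since the paper's probabilistic analysis is carried out on $\R^d$ for the cutoff data, with only the $\H^s(\T^d)$-norm appearing on the right-hand side.

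Second, you invoke "uniqueness together with finite speed of propagation for the nonlinear equation" to match the $\R^d$ solution with a $\T^d$ solution. At regularity $\H^s$ with $s<1$ the solutions are not classical, and finite speed of propagation for the nonlinear flow is not free at this level; the paper spends Appendix \ref{SEC:APP} justifying it by truncating the data in frequency, applying finite speed of propagation to the resulting smooth solutions, and passing to the limit with a subinterval-by-subinterval continuity/perturbation argument. Relatedly, for $d=3$ there is no direct a priori energy bound for $\bv^\o$; one needs the uniform-in-$N$ probabilistic energy bound for truncated approximations (Lemma \ref{PROP:Penergy}) together with the $L^\infty_t$ probabilistic estimate of Proposition \ref{PROP:infty}, neither of which your outline supplies.
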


\noi
Here, $S_\textup{per}(t)$ denotes the propagator for the linear wave equation on $\T^d$ given by 
\begin{equation*}
S_\textup{per}(t)\left(f_0, f_1\right):=\cos(t|\nabla|)f_0+\frac{\sin (t|\nabla|)}{|\nabla|}f_1.
\end{equation*}

This is the first result on almost sure global existence of 
unique solutions to energy-critical  hyperbolic/dispersive PDEs
in the periodic setting.
In particular, when $ d= 4$, 
Theorem \ref{THM:GWP} provides an affirmative answer
to a question posed in \cite{BTT2}.
When $ d= 4$,
 Burq-Thomann-Tzvetkov \cite{BTT2} 
previously proved almost sure global existence
(without uniqueness)
of weak solutions to \eqref{NLW} on $\T^4$ 
for $0< s< 1$.
 Moreover, the continuity 
(of the nonlinear part) of the solution constructed in \cite{BTT2} was obtained only in a weaker topology. 
Their main approach was
to establish a probabilistic energy estimate
and apply a compactness argument.
The lack of uniqueness in \cite{BTT2} comes from the use of 
the compactness argument.
Theorem \ref{THM:GWP}
allows us to upgrade the weak solutions in \cite{BTT2}
to strong solutions.\footnote{Here, we are indeed referring to the nonlinear part
 of a solution $u$.}

In the Euclidean setting,
 we introduced in \cite{Poc, OP} the probabilistic perturbation theory
and proved  almost sure global existence 
 of unique solutions to \eqref{NLW}
on $\R^d$, $d = 3, 4,$ and $ 5$.
Let us briefly discuss the randomization of real-valued functions on $\R^d$
employed in \cite{Poc, OP}.
Let $\psi \in \mathcal{S}(\R^d)$ 
be such that $\supp \psi \subset [-1, 1]^d$,  $\psi(-\xi ) = \cj{\psi(\xi)}$,
and 
\begin{align}
 \sum_{n \in \Z^d} \psi(\xi - n) \equiv 1 \quad \text{for all }\xi \in \R^d.
\label{Zpsi}
 \end{align}

\noi
Then, any function $u$ on $\R^d$ can be written as
\begin{equation}
 u = \sum_{n \in \Z^d} \psi(D-n) u,
\label{Ziv2}
 \end{equation}

\noi
where $ \psi (D-n) $ denotes the Fourier multiplier operator
with symbol $\psi (\,\cdot\, -n)$.
We then consider  a randomization 
adapted to the decomposition \eqref{Ziv2}.
More precisely, given a pair $(u_0, u_1)$ of  functions on $\R^d$, 
we  define the Wiener randomization $(u_0^\omega, u_1^\omega)$
of $(u_0,u_1)$ by
\begin{align}
(u_0^\omega, u_1^\omega)  := 
\bigg(\sum_{n \in \Z^d} g_{n,0} (\omega) \psi(D-n) u_0,
\sum_{n \in \Z^d} g_{n,1} (\omega) \psi(D-n) u_1\bigg).
\label{RR1}
\end{align}

\noi
This randomization is based on the uniform decomposition
of  the frequency space $\R^d_\xi$ 
into the unit cubes, called the Wiener decomposition \cite{W}.
In \cite{Poc, OP}, we proved that, given 
 $s < 1$ satisfying the condition in Theorem \ref{THM:GWP}
 and any $(u_0, u_1) \in \H^s(\R^d)$, 
the energy-critical defocusing NLW on $\R^d$
is  almost surely globally well-posed with respect 
to the Wiener randomization $(u_0^\o, u_1^\o)$ defined in \eqref{RR1}.
See \cite{LM, BOP1, BOP2, LM2}
for other results utilizing the Wiener randomization \eqref{RR1}.

Our basic strategy for the proof of Theorem \ref{THM:GWP}
is to make use of the finite speed of propagation of solutions
and reduce the problem on $\T^d \cong \big[-\frac 12, \frac 12\big)^d$ to a problem in the Euclidean setting.
Fix $\eta \in C^\infty_c(\R^d; \R)$
such that $\eta \equiv 1$ on $[-1, 1]^d $.
Given $T > 0$, let 
\begin{align}
\eta_{_T}(x) = \eta\big(\jb{T}^{-1}x\big),
\label{eta}
\end{align}

\noi
\noi
where $\jb{\, \cdot\,} = 1 + |\cdot|$.
Let $\bu$ be a solution to the following energy-critical defocusing NLW
on $\R^d$:
\begin{equation}
\label{NLW2}
\begin{cases}
\pa_{t}^2 \bu -\Delta \bu+|\bu|^{\frac{4}{d-2}}\bu =0 
\\
(\bu,   \pa_t \bu)\big|_{t = 0} = (\bu_{0, T}, \bu_{1, T}):=(\et u_0, \et u_1), 
\end{cases}
\quad \quad (t,x)\in [0, T] \times \R^d, 
\end{equation} 

\noi
where we view $(u_0, u_1)$ as periodic functions on $\R^d$ with period 1.
Then, by the finite speed of propagation, 
we see that $u: =  \bu |_{[0, T]\times \T^d}$
is a solution to the periodic NLW \eqref{NLW} on the time interval $[0, T]$
with initial data $(u_0, u_1)$.
In the classical deterministic setting, 
this allows us to transfer global well-posedness on  NLW on $\R^d$
to the corresponding 
global well-posedness of the periodic NLW  on $\T^d$.
In our current probabilistic setting, 
however, 
this is not so straightforward. 
In particular, under such a reduction
from the periodic setting to the Euclidean setting, 
our random initial data $(u_0^\o, u_1^\o)$ on $\T^d$ of the form  \eqref{R1} 
does not give rise to 
an appropriate random initial data on $\R^d$ of the form \eqref{RR1} such that 
the results in \cite{Poc, OP} are directly applicable.

Fix  a pair   $(u_0, u_1)$ of real-valued functions defined on $\H^s(\T^d)$.
Given $T > 0$, 
define 
a pair $(\bu_{0, T}^\o, \bu_{1, T}^\o)$ of random functions on $\R^d$ 
by setting
\begin{align}
(\bu_{0, T}^\o, \bu_{1, T}^\o)
:& \!=(\et u_0^\o, \et u_1^\o)\notag \\
& = \bigg( \sum_{n \in \Z^d} \et (x) g_{n, 0}(\o) \ft u_0(n) e^{in\cdot  x},
\sum_{n \in \Z^d} \et (x) g_{n, 1}(\o) \ft u_1(n) e^{in\cdot x}\bigg),
\label{R3}
\end{align}

\noi
where $\et$ is as in \eqref{eta}
and 
$(u_0^{\omega}, u_1^\omega)$ is the randomization of $(u_0, u_1)$ defined in \eqref{R1}, satisfying \eqref{cond}. 
Then, in order to prove Theorem \ref{THM:GWP}, we need  to prove almost sure
well-posedness of \eqref{NLW2} on $[0, T] \times \R^d $ with 
$(\bu,   \pa_t \bu)\big|_{t = 0} = (\bu_{0, T}^\o, \bu_{1, T}^\o)$
for some sequence of  $T \to \infty$.
First, note that the randomized initial data 
$(\bu_{0, T}^\o, \bu_{1, T}^\o)$ in \eqref{R3} depends on $T$.
Moreover, it is not of the form \eqref{RR1}.
Indeed, we have 
\begin{align}
\wt{\bu_{j, T}^\o}(\xi) = \wt{\et u_j^\o}(\xi)  = \sum_{n \in \Z^d}  \fet(\xi - n) g_{n, j}(\o) \ft u_j(n),
\quad j = 0, 1.
\label{Hs0}
\end{align}

\noi
In particular, the Fourier transform $\wt{\bu_{j, T}^\o}(\xi) $
depends on infinitely many $g_{n, j}$'s for each $\xi \in \R^d$.  See Remark \ref{REM:PW} below.

The proof of almost sure global well-posedness of \eqref{NLW} on $\R^d$  in \cite{Poc, OP}
consists of two disjoint parts:
 (i) a probabilistic part
and (ii) a deterministic part.
We can apply the deterministic part of the argument without any change.
Therefore, our main task is to 
adapt   the probabilistic part to our current problem.
In particular, we will establish probabilistic Strichartz estimates
(Propositions \ref{PROP:Str} and  \ref{PROP:infty} below)
that allow us to control random linear profiles on $\R^d$
in terms of functions on $\T^d$.
See Section \ref{SEC:Str}.
We then need to adjust the argument in \cite{Poc, OP} suitably
to our setting.

We conclude this introduction by stating several remarks.

\begin{remark}\label{REM:PW} \rm
If there were a function $\eta \in L^2(\R^d)$ with the properties
(i) $\eta(x) \equiv 1$ on $[-\frac 12, \frac 12)^d$
and (ii) its Fourier transform $\wt \eta$ has a compact support, 
then we could basically apply the arguments in \cite{Poc, OP}
to study \eqref{NLW2} with random initial data $(\bu_{0, T}^\o, \bu_{1, T}^\o)$ defined in \eqref{R3}.
However, 
Paley-Wiener Theorem
(Theorems IX.11 and IX.12 in \cite{RS})
states that there is no such function $\eta \in L^2(\R^d)$
satisfying both (i) and (ii).

\end{remark}

\begin{remark}\label{REM:uniq} \rm
 The  uniqueness statement  in Theorem \ref{THM:GWP} holds
in the following sense.
The existence part of Theorem \ref{THM:GWP}
states that 
given any $\o\in \O_{(u_0, u_1)} $, 
 there exists a global solution $u^\o$ to \eqref{NLW}.
Now, we fix one such $\o \in \O_{(u_0,u_1)}$
and let $f^\o := S_\textup{per}(\cdot)(u_0^\omega, u_1^\omega)$.
%
%
Setting 
 $v^\o := u^\o - f^\o$, we see that $v^\o$ is a global solution to 
the perturbed NLW  on $\T^d$:
\begin{align}
\begin{cases}
\pa_{t}^2v^\o-\Delta v^\o+|v^\o+f^\o|^{\frac{4}{d-2}}(v^\o+f^\o)=0\\
(v^\o, \dt v^\o)|_{t = 0} = (0, 0).
\end{cases}
\label{uniq2}
\end{align}

\noi
Then, the  uniqueness in Theorem \ref{THM:GWP} holds for $v^\o$ 
in 
\begin{align}
 X(\R) := \big\{ (v, \dt v): (v, \dt v) \in C(\R,\dot \H^1(\T^d)),
\ v \in L_{\rm loc}^\frac{d+2}{d-2}(\R, L^{\frac{2(d+2)}{d-2}}(\T^d))\big\}.
\label{XX}
\end{align}

\noi
This follows from 
a standard deterministic analysis of the perturbed NLW \eqref{uniq2} on $\T^d$.
See Appendix \ref{SEC:uniq}.
In terms of $u^\o$, the uniqueness
holds 
in 
\[ \big( S_\textup{per}(t)(u_0^\omega, u_1^\omega), 
 \dt S_\textup{per}(t)(u_0^\omega, u_1^\omega)\big) + X(\R).\]

\noi
Lastly, note that the almost sure global solutions constructed in \cite{Poc, OP}
also satisfy the same kind of  uniqueness.
\end{remark}

\begin{remark}\rm
Let  ${\bf u_0}:\O\to
\mathcal{H}^s(\T^d)$
be the map 
given by 
${\bf u_0}(\omega) : =  (u_0^\omega,u_1^\omega)$,
where $(u_0^\omega,u_1^\omega)$ 
is as  in \eqref{R1}.
Then, the map  ${\bf u_0}$
induces a probability measure $\mu = \mu_{(u_0, u_1)} = P\circ {\bf u_0}^{-1}$
on $\mathcal{H}^s(\T^d)$.
Now, let  $\Si_{(u_0, u_1)} = {\bf u_0}(\O_{(u_0, u_1)})$, 
where $\O_{(u_0, u_1)}$ is as in Theorem \ref{THM:GWP}.
Then, while $\mu(\Sigma_{(u_0,u_1)})=1$, 
it is possible that 
 $\mu\big(\Phi(t)(\Sigma_{(u_0, u_1)})\big) $ becomes smaller for some $t \ne 0$ and even tends to  0, 
 where
 $\Phi(t)$ denotes the solution map of \eqref{NLW}.
Arguing as in \cite{Poc}, 
we can strengthen 
 the statement in Theorem \ref{THM:GWP}
and
show that there exists another set of $\mu$-full measure $\Sigma \subset \mathcal{H}^s(\T^d)$
such that 
(a)  for any $(\phi_0,\phi_1)\in \Sigma$,
there exists a unique global solution $u$ to  \eqref{NLW}  with initial data 
$\left(u,\pa_t u\right)\big|_{t=0}=(\phi_0, \phi_1)$
and (b) 
 $\mu\big(\Phi(t)(\Sigma)\big)  = 1 $ for any $t \in \R$.
 Namely, the measure of our new initial data set $\Si$ does not become smaller
under the dynamics of \eqref{NLW}. 
See  \cite{BT3, OQ, Poc} for related discussions in this direction.

\end{remark}

\section{Notations}
Given a periodic function $f$ on $\T^d$, 
we use $\ft{f}(n) = \F_{\T^d}(f)(n)$ to denote the Fourier coefficient of $f$ on $\T^d$.
Given a function $f$ on $\R^d$, 
we use $\wt{f}(\xi) = \F_{\R^d}(f)(\xi)$ to denote the Fourier transform of $f$ on $\R^d$.
Let $f$ be a periodic function on $\T^d$.
By viewing $f$ as a tempered distribution on $\R^d$
we have 
\[ \wt f (\xi) = \sum_{n \in \Z^d} \dl (\xi - n) \ft f(n) .\]

\noi
Moreover, given $\eta \in \S(\R^d)$, we have 
\begin{align}
 \wt {\eta f} (\xi) = \sum_{n \in \Z^d} \wt \eta (\xi - n) \ft f(n).
\label{FT1}
 \end{align}

\noi
Given $n \in \Z^d$, 
let $Q_n$ be  the unit cube $Q_n:=n+\big[-\frac 12, \frac 12 \big)^d$ centered at $n$.

Next, we briefly go over 
the Littlewood-Paley theory on $\R^d$.
Let $\varphi:\R \to [0, 1]$ be a smooth  bump function supported on $[-\frac{8}{5}, \frac{8}{5}]$ 
and $\varphi\equiv 1$ on $\big[-\frac 54, \frac 54\big]$.
Given dyadic $N \geq1$, 
we set $\varphi_1(\xi) = \varphi(|\xi|)$
and 
\[\varphi_N(\xi) = \varphi\big(\tfrac{|\xi|}N\big)-\varphi\big(\tfrac{2|\xi|}N\big)\]

\noi
for $N \geq 2$.
Then, we define the Littlewood-Paley projection $\P_N$
as the Fourier multiplier operator with symbol $ \varphi_N$.
Moreover, we define $\P_{\leq N}$ and $\P_{\geq N}$
by 
 $\P_{\leq N} = \sum_{1\leq M \leq N} \P_M$ and $\P_{> N} = \sum_{M> N} \P_M$.
For a periodic function $f$ on $\T^d$, we
define $\P_N$ 
to be the projection onto the frequencies $ \{\frac 12 N < |n| \leq N\}$ if $N \geq 2$
and $ \{|n| \leq 1\}$ if $N =1$.
In the following, we use $\P_N$ to denote
the Littlewood-Paley projection
for both functions on $\R^d$ and $\T^d$, depending on the context.

We use $S(t)$ to denote the propagator for the linear wave equation on $\R^d$ 
given by 
\begin{equation}\label{Zlinear}
S(t)\left(f_0, f_1\right):=\cos(t|\nabla|)f_0+\frac{\sin (t|\nabla|)}{|\nabla|}f_1.
\end{equation}

\noi
We say that $u$ is a solution to the following nonhomogeneous wave equation on $\R^d$:
\begin{equation}
\begin{cases}
\pa_{t}^2u-\Delta u+F=0\\
(u, \dt u)|_{t = t_0} = (\phi_0, \phi_1)
\end{cases}
\label{Wave}
\end{equation}

\noi
on a time interval $I$ containing $t_0$, 
if $u$ satisfies  the following Duhamel formulation:
\begin{equation}
u(t)=S(t-t_0)(\phi_0, \phi_1)
-\int_{t_0}^t \frac{\sin ((t-t')|\nabla|)}{|\nabla|}F(t')dt'
\label{Duhamel}
\end{equation}

\noi
for $t \in I$.
We now recall the Strichartz estimates for wave equations on $\R^3$.
 We say that $(q,r)$ is a $s$-wave admissible pair
if $q\geq 2$, $2\leq r<\infty$,
\[\frac{1}{q}+\frac{d-1}{2r}\leq \frac{d-1}{4},
\quad \text{and}\quad 
\frac 1q+\frac dr=\frac d2-s.\]

\noi
Then, we have the following Strichartz estimates.
See \cite{Ginibre, Lindblad, Keel}
for more discussions on the  Strichartz estimates. 

\begin{lemma}
\label{LEM:Strichartz}
Let $s>0$. 
Let  $(q,r)$ and $(\tilde{q},\tilde{r})$ be $s$- and $(1-s)$-wave admissible pairs,
respectively. 
Then, we have
\begin{align}\label{Strichartz}
\|(u, \dt u)\|_{L^\infty_t(I;\dot{\mathcal{H}}^s_x(\R^d))}+\|u\|_{L^q_t(I; L^r_x)}\lesssim
\|(\phi_0, \phi_1) \|_{\dot{\mathcal{H}}^{s}(\R^d)}+\|F\|_{L^{\tilde{q}'}_t(I; L^{\tilde{r}'}_x(\R^d))}
\end{align}

\noi
for all solutions  $u$ to \eqref{Wave} on a time interval $I \ni t_0$.

\end{lemma}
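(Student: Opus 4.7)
The plan is to split the Duhamel representation \eqref{Duhamel} as $u=u_{\mathrm{hom}}+u_{\mathrm{inh}}$ with $u_{\mathrm{hom}}(t):=S(t-t_0)(\phi_0,\phi_1)$ and $u_{\mathrm{inh}}$ the retarded integral, and to estimate each piece in both norms on the left-hand side of \eqref{Strichartz} separately. The bound $\|(u_{\mathrm{hom}},\dt u_{\mathrm{hom}})\|_{L^\infty_t\dot{\mathcal{H}}^s_x}\lesssim\|(\phi_0,\phi_1)\|_{\dot{\mathcal{H}}^s}$ is immediate from \eqref{Zlinear} by Plancherel, while the corresponding $L^\infty_t\dot{\mathcal{H}}^s_x$-bound on $u_{\mathrm{inh}}$ follows from the energy identity for the inhomogeneous wave equation together with duality against the space-time bound obtained below.

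For the space-time bound on $u_{\mathrm{hom}}$, I would write $S(t)$ as a combination of the half-wave propagators $e^{\pm it|\nabla|}$, perform a Littlewood--Paley decomposition, and reduce to proving $\|e^{it|\nabla|}\P_N f\|_{L^q_tL^r_x}\lesssim N^s\|\P_N f\|_{L^2_x}$ uniformly in dyadic $N\geq 1$. Via $TT^*$ and Hardy--Littlewood--Sobolev, this is a consequence of the stationary-phase dispersive estimate $\|e^{it|\nabla|}\P_N f\|_{L^\infty_x}\lesssim N^{\frac{d+1}{2}}|t|^{-\frac{d-1}{2}}\|\P_N f\|_{L^1_x}$ on the cone $|\xi|\sim N$. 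The Littlewood--Paley square function inequality, available since $r<\infty$, then sums the dyadic pieces, yielding the estimate on the sharp admissible line $1/q+(d-1)/(2r)=(d-1)/4$; non-sharp pairs follow from the sharp case by Sobolev embedding in the spatial exponent.

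For $u_{\mathrm{inh}}$, I would first bound the symmetric untruncated integral $\int_\R\frac{\sin((t-t')|\nabla|)}{|\nabla|}F(t')\,dt'$ in $L^q_tL^r_x$ by factoring its kernel through $e^{\pm i(t-t')|\nabla|}=e^{\pm it|\nabla|}(e^{\pm it'|\nabla|})^*$ and composing the homogeneous Strichartz bound with its dual applied to $F$, which yields a bound by $\|F\|_{L^{\tilde q'}_tL^{\tilde r'}_x}$. The Christ--Kiselev lemma then converts this symmetric-kernel estimate into the retarded-kernel bound appearing in \eqref{Duhamel}, provided $\tilde q'<q$; this is where the non-endpoint portion of the admissibility inequality is invoked. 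The main technical obstacle is the sharp double-endpoint case $(q,r)=(\tilde q,\tilde r)=(2,\tfrac{2(d-1)}{d-3})$ for $d\geq 4$, where Christ--Kiselev fails and one must instead run the bilinear interpolation argument of Keel--Tao directly on the retarded kernel in order to close the estimate.
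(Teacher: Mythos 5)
The paper does not prove this lemma at all: it is quoted as a standard black-box result with references to Ginibre--Soffer--Velo, Lindblad--Sogge, and Keel--Tao, so there is no in-paper argument to compare against. Your outline is precisely the standard proof from those references --- dispersive estimate on a dyadic cone, $TT^*$ plus Hardy--Littlewood--Sobolev, square-function summation, Christ--Kiselev for the retarded term with the double-endpoint $q=\tilde q=2$ handled by the Keel--Tao bilinear argument --- and it is sound; the only loose phrase is attributing the $L^\infty_t\dot{\mathcal H}^s$ control of the inhomogeneous part to an ``energy identity,'' when what one actually uses is duality of $|\nabla|^{s-1}\sin((t-t')|\nabla|)$ at fixed $t$ against the dual homogeneous estimate for the $(1-s)$-admissible pair, which needs no truncation lemma.
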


\noi
In our argument, 
we will only use 
the following
wave admissible pairs: $\big(\frac{d+2}{d-2}, \frac{2(d+2)}{d-2}\big)$ with $s = 1$
and 
 $(\infty,2)$ with $s = 0$.
For simplicity,  we  denote the space $L^q_t(I; L^r_x)$  by $L^q_IL^r_x$
or  $L^q_TL^r_x$ if $I=[0,T]$.

In the following, constants in various estimates depend on 
the smooth cutoff function $\eta$, appearing in \eqref{eta}.
Since we fix such $\eta$ once and for all, 
we suppress the dependence on $\eta$.
Lastly, in view of  the time reversibility of the equation, 
we only consider positive times in the following.

\section{Reduction to the Euclidean setting}
\label{SEC:reduction}

We first reduce
Theorem \ref{THM:GWP}
to 
the following proposition 
on ``almost'' almost sure global well-posedness of \eqref{NLW}.

\begin{proposition}\label{PROP:GWP}
Let $(s, d)$ be as in Theorem \ref{THM:GWP}.
Given   $(u_0, u_1) \in \mathcal{H}^s(\T^d)$, 
let $(u_0^\omega, u_1^\omega)$
be the  randomization defined in \eqref{R1},
satisfying \eqref{cond}. 
Then, for any given $T\geq 1$ and $\eps > 0$, 
there exists a set $ \O_{T, \eps}\subset \Omega $ with $P(\O_{T, \eps}^c) < \eps$ 
such that, 
for  every $\o \in \O_{T, \eps}$, there exists a unique solution $u^\o$ to \eqref{NLW}
with 
$(u^\omega, \dt u^\omega)|_{t = 0} = (u_0^\omega, u_1^\omega)$
in the class:
\begin{align}
\big(S_\textup{per}(t)(u_0^\omega, u_1^\omega), \dt S_\textup{per}(t)(u_0^\omega, u_1^\omega)\big)
+C([0, T]; \mathcal{H}^1(\T^d))
\subset C([0, T]; \mathcal{H}^s(\T^d)).
\label{class1}
\end{align}

\end{proposition}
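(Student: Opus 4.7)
The plan is to fix $T \geq 1$ and use finite speed of propagation to reduce the periodic Cauchy problem \eqref{NLW} on $[0, T]\times\T^d$ to the Euclidean Cauchy problem \eqref{NLW2} on $[0, T]\times\R^d$ with compactly truncated random data $(\bu_{0, T}^\omega, \bu_{1, T}^\omega) = (\et u_0^\omega, \et u_1^\omega)$ from \eqref{R3}, and then to apply the probabilistic well-posedness framework of \cite{Poc, OP} with the new probabilistic Strichartz estimates (Propositions \ref{PROP:Str} and \ref{PROP:infty}) replacing the Wiener-randomization estimates used there. Since $\et \equiv 1$ on $[-\jb{T}, \jb{T}]^d$ and this set contains the backwards light cone of height $T$ over $\T^d \cong [-\tfrac12, \tfrac12)^d$, any solution $\bu^\omega$ to \eqref{NLW2} on $[0, T]\times\R^d$ coincides, on that cone, with the $1$-periodization of its own restriction to $\T^d$; hence $u^\omega := \bu^\omega|_{[0, T]\times \T^d}$ is a solution to the periodic NLW \eqref{NLW} with data $(u_0^\omega, u_1^\omega)$.

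To produce $\bu^\omega$ I would use the standard ansatz $\bu^\omega = \bz^\omega + \bv^\omega$, where $\bz^\omega(t) := S(t)(\bu_{0, T}^\omega, \bu_{1, T}^\omega)$ is the free random evolution and $\bv^\omega$ solves
\begin{equation*}
\pa_t^2 \bv^\omega - \Delta \bv^\omega + \bigl|\bv^\omega + \bz^\omega\bigr|^{\frac{4}{d-2}}(\bv^\omega + \bz^\omega) = 0, \qquad (\bv^\omega, \pa_t \bv^\omega)\big|_{t = 0} = (0, 0),
\end{equation*}
on $[0, T] \times \R^d$. Combining Propositions \ref{PROP:Str}--\ref{PROP:infty} with the moment bound \eqref{cond} and a Chebyshev/union-bound argument, one extracts a set $\O_{T, \eps} \subset \O$ with $P(\O_{T, \eps}^c) < \eps$ on which $\bz^\omega$ is quantitatively controlled in the critical spacetime space $L^{\frac{d+2}{d-2}}_T L^{\frac{2(d+2)}{d-2}}_x(\R^d)$, in $L^\infty_T L^\infty_x(\R^d)$, and in $\mathcal{H}^1$ after a suitable frequency-smoothing operation. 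For $\omega \in \O_{T, \eps}$, the deterministic argument of \cite{Poc, OP}---Strichartz estimates from Lemma \ref{LEM:Strichartz}, the deterministic $\mathcal{H}^1$-global theory for the energy-critical NLW on $\R^d$, and an almost-conservation bound on the perturbed energy $E(\bv^\omega)$---then produces a unique $\bv^\omega \in C([0, T]; \mathcal{H}^1(\R^d))$.

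Restricting $\bu^\omega$ to $\T^d$ yields the claimed periodic solution, and observing that $S(t)(\et u_0^\omega, \et u_1^\omega)|_{\T^d}$ differs from $S_\textup{per}(t)(u_0^\omega, u_1^\omega)$ by a $C([0, T]; \mathcal{H}^1(\T^d))$-term (again by finite speed of propagation applied to the linear wave equation on $\R^d$, since the two data agree on the backwards light cone) gives the decomposition \eqref{class1}; uniqueness in this class is the statement of Remark \ref{REM:uniq}, proved by the standard perturbation theory for \eqref{uniq2}.

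The principal obstacle, and the main novelty over \cite{Poc, OP}, is the probabilistic control of $\bz^\omega$. As emphasized in Remark \ref{REM:PW} and visible in \eqref{Hs0}, for each $\xi \in \R^d$ the Fourier transform $\wt{\et u_j^\omega}(\xi) = \sum_{n \in \Z^d} \fet(\xi - n) g_{n, j} \ft u_j(n)$ is an \emph{infinite} linear combination of the independent random coefficients $\{g_{n, j}\}_{n \in \Z^d}$, so the clean dyadic-block independence of the Wiener randomization \eqref{RR1} is unavailable. Propositions \ref{PROP:Str} and \ref{PROP:infty} must therefore be proved by combining the decay of $\fet$, Khintchine/Gaussian large-deviation estimates for the resulting series, and the transfer between $\T^d$-Fourier coefficients and $\R^d$-Fourier transforms via \eqref{FT1}; with those in hand, the rest of the argument becomes a careful adaptation of the scheme in \cite{Poc, OP}.
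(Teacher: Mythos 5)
Your overall strategy coincides with the paper's: reduce to the Euclidean problem \eqref{NLW2}/\eqref{NLW3} with the truncated random data \eqref{R3}, solve it via the decomposition $\bu^\o = \bz^\o + \bv^\o$ using the new probabilistic Strichartz estimates (Propositions \ref{PROP:Str} and \ref{PROP:infty}) together with the deterministic perturbation theory of \cite{Poc, OP}, and then restrict to $\T^d$; you also correctly single out the loss of the Wiener-randomization structure as the main new probabilistic difficulty. There is, however, a genuine gap at the restriction step. You assert that any solution $\bu^\o$ of \eqref{NLW2} coincides on the backwards light cone with the $1$-periodization of its own restriction, so that $u^\o := \bu^\o|_{[0,T]\times\T^d}$ solves the periodic equation. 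The classical finite speed of propagation (and the uniqueness of translated solutions implicit in your periodicity claim) is available for smooth, or at least finite-energy, solutions; but the solution furnished by Proposition \ref{PROP:GWP2} is $\bu^\o = \bz^\o + \bv^\o$ with $\bz^\o$ only at regularity $\H^s$, $s<1$ (indeed $s=0$ is allowed when $d=5$), and it is defined through a Duhamel/fixed-point formulation in Strichartz spaces rather than classically. One cannot simply invoke finite speed of propagation for the nonlinear flow at this regularity, and one must in addition verify that the restriction satisfies the \emph{periodic} Duhamel formulation driven by $S_{\textup{per}}(t)$, which is what membership in the class \eqref{class1} ultimately requires.

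The paper devotes Appendix \ref{SEC:APP} precisely to closing this gap: the data are truncated in frequency, $u_{j,N}^\o = \P_{\leq N} u_j^\o$, so that the corresponding Euclidean solutions $\bu_N^\o$ are smooth and the classical finite speed of propagation applies to them; a probabilistic estimate on $\bz^\o - \bz_N^\o$ (Lemma \ref{LEM:AStr}) is then combined with an iterated perturbation/continuity argument on small subintervals $I_j$ (chosen via \eqref{A7}--\eqref{A8}) to show that $\bv_{N_k}^\o \to \bv^\o$ in $L^3_T L^6_x \cap C([0,T];\dot{\mathcal{H}}^1)$, after which one passes to the limit in the periodic Duhamel formulation for the truncated solutions. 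Your proposal needs this (or an equivalent) approximation and stability argument to be complete; as written, the transfer from $\R^d$ to $\T^d$ is asserted rather than proved, and this is exactly the step the paper flags as the delicate part of the reduction.
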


\noi
It is easy to see that Proposition \ref{PROP:GWP} implies Theorem \ref{THM:GWP}.
See, for example, \cite{Colliand_Oh, Poc}.
Therefore, in the remaining part of the paper, 
we focus on the proof of Proposition \ref{PROP:GWP}
for each fixed $T\geq 1$ and $\eps > 0$.

Given $(u_0, u_1)\in \H^s(\T^d)$
and $T\geq 1$, 
let  $(\bu_{0, T}^\o, \bu_{1, T}^\o)$ be the random functions on $\R^d$ defined  in \eqref{R3}.
Consider the following Cauchy problem:
\begin{equation}
\label{NLW3}
\begin{cases}
\pa_{t}^2 \bu^\o -\Delta \bu^\o+|\bu^\o|^{\frac{4}{d-2}}\bu^\o =0 
\\
(\bu^\o,   \pa_t \bu^\o)\big|_{t = 0} = (\bu_{0, T}^\o, \bu_{1, T}^\o), 
\end{cases}
\quad \quad (t,x)\in [0, T] \times \R^d.
\end{equation} 

\noi
In view of  the finite speed of propagation, 
Proposition \ref{PROP:GWP} follows once
we prove the following proposition.
See Appendix \ref{SEC:APP} for this part of the reduction.

\begin{proposition}\label{PROP:GWP2}
Let $(s, d)$ be as in Theorem \ref{THM:GWP}.
Given   $(u_0, u_1) \in \mathcal{H}^s(\T^d)$ and $T\geq 1$, 
let $(\bu_{0, T}^\o, \bu_{1, T}^\o)$ be the random functions on $\R^d$ defined in \eqref{R3}, 
satisfying \eqref{cond}. 
Then, for any $\eps > 0$, 
there exists a set $ \wt \O_{T, \eps}\subset \Omega $ with $P(\wt \O_{T, \eps}^c) < \eps$ 
such that, 
for  every $\o \in \wt \O_{T, \eps}$, there exists a unique solution $\bu^\o$ to \eqref{NLW3}
with $(\bu^\omega, \dt \bu^\omega)|_{t = 0} = (\bu_{0, T}^\omega, \bu_{1, T}^\omega)$
in the class:
\begin{align*}
\big(S(t)(\bu_{0, T}^\omega, \bu_{1, T}^\omega), \dt S(t)(\bu_{0, T}^\omega, \bu_{1, T}^\omega)\big)
+C([0, T]; \mathcal{H}^1(\R^d))
\subset C([0, T]; \mathcal{H}^s(\R^d)).
\end{align*}
Moreover, the nonlinear part 
$\bv^\omega:=\bu^\omega-S(\cdot)(\bu_{0, T}^{\omega}, \bu_{1, T}^{\omega})$
of the solution
satisfies the bounds
\begin{align}\label{Stricbv}
\|\bv^\omega\|_{L^q_t([0,T], L^r_x(\R^d))}\leq C(T,\eps,\|(u_0,u_1)\|_{\mathcal{H}^s(\T^d)}),
\end{align}
for all $1$-wave admissible pairs $(q,r)$.

\end{proposition}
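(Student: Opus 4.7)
The plan is to follow the two-part strategy of \cite{Poc, OP} for the energy-critical NLW on $\R^d$, replacing only the probabilistic inputs, which are sensitive to the structure of the randomization, while reusing the deterministic energy/perturbation machinery essentially verbatim. Write $F^\o(t) := S(t)(\bu_{0,T}^\o, \bu_{1,T}^\o)$ and seek $\bu^\o$ in the ansatz $\bu^\o = F^\o + \bv^\o$. Then $\bv^\o$ formally satisfies the forced energy-critical NLW on $[0,T]\times \R^d$,
\begin{equation*}
\pa_t^2 \bv^\o - \Dl \bv^\o + |\bv^\o + F^\o|^{\frac{4}{d-2}}(\bv^\o + F^\o) = 0, \qquad (\bv^\o, \dt \bv^\o)|_{t=0} = (0,0),
\end{equation*}
with zero initial data, so the task is to produce, on an event of probability at least $1-\eps$, a solution $\bv^\o \in C([0,T];\H^1(\R^d))$ obeying the Strichartz bound \eqref{Stricbv}.

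For the probabilistic part, I would invoke the Strichartz estimates for $F^\o$ promised in Section \ref{SEC:Str} (Propositions \ref{PROP:Str} and \ref{PROP:infty}) to extract a set $\wt \O_{T,\eps}$ with $P(\wt\O_{T,\eps}^c) < \eps$ on which one has, for a fixed finite list of exponents $(s', q, r)$ covering the needs of the deterministic part,
\begin{equation*}
\big\| \jb{\nabla}^{s'} F^\o \big\|_{L^q_T L^r_x(\R^d)} \leq R\big(T,\eps,\|(u_0,u_1)\|_{\H^s(\T^d)}\big) < \infty.
\end{equation*}
The starting point is the representation $\wt{\bu_{j,T}^\o}(\xi) = \sum_{n \in \Z^d} \fet(\xi-n)\, g_{n,j}(\o)\, \ft u_j(n)$ from \eqref{Hs0}; combining \eqref{cond} with a Khintchine/Minkowski argument gives $L^p_\o$-control of the relevant norms of $F^\o$ by the $\ell^2_n$-norm of $\ft u_j(n)$, which in turn is comparable to $\|(u_0,u_1)\|_{\H^s(\T^d)}$ after absorbing the expected loss of $\jb{T}^{d/2}$ from $\fet = \jb{T}^d \wt \eta(\jb{T}\,\cdot\,)$ into the constant $R$. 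Chebyshev's inequality applied over the finite list of norms then produces $\wt \O_{T,\eps}$.

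For the deterministic part, I would proceed as in \cite{Poc, OP}: on $\wt\O_{T,\eps}$, run a bootstrap on the energy of $\bv^\o$. Testing the equation against $\dt \bv^\o$ yields
\begin{equation*}
\tfrac{d}{dt} E(\bv^\o) \les \big\| |\bv^\o + F^\o|^{\frac{4}{d-2}}(\bv^\o + F^\o) - |\bv^\o|^{\frac{4}{d-2}}\bv^\o \big\|_{L^2_x}\, \|\dt \bv^\o\|_{L^2_x},
\end{equation*}
and expanding the difference produces cross terms each carrying at least one factor of $F^\o$. Each such term is controlled by H\"older in space-time in terms of $E(\bv^\o)^{1/2}$ and the Strichartz norms of $F^\o$ from the previous step; an interval-by-interval Gronwall argument on a partition of $[0,T]$ whose length depends on $R$ then closes the bootstrap and gives an a priori bound $\sup_{[0,T]} E(\bv^\o) \leq C(T,\eps,\|(u_0,u_1)\|_{\H^s(\T^d)})$. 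Combined with the stability/long-time perturbation theory for the deterministic energy-critical NLW on $\R^d$ applied with approximate solution $F^\o$ (or equivalently with local existence of $\bv^\o$ in the energy space plus continuation via the a priori bound), this promotes the a priori estimate to an actual solution in the claimed class, and \eqref{Stricbv} for arbitrary $1$-wave admissible $(q,r)$ follows by applying the Strichartz inequality to the Duhamel representation of $\bv^\o$.

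The main obstacle is the probabilistic step. As emphasized in Remark \ref{REM:PW}, the Paley-Wiener theorem rules out the clean independence structure enjoyed by the Wiener randomization \eqref{RR1}: each Fourier mode $\wt{\bu_{j,T}^\o}(\xi)$ in \eqref{Hs0} is an infinite linear combination of the $g_{n,j}$'s, so the standard square-function machinery from \cite{Poc, OP} does not apply directly. The required estimate is essentially that the correlations induced by $\fet$ are mild enough, quantitatively, that Khintchine still gives a gain of integrability together with a bound scaling with $\|(u_0,u_1)\|_{\H^s(\T^d)}$ and depending on $T$ only through the harmless volume factor $\jb{T}^{d/2}$. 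Once Propositions \ref{PROP:Str} and \ref{PROP:infty} supply this, the rest of the argument is a direct transcription of the deterministic machinery of \cite{Poc, OP}.
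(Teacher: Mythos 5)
Your overall architecture --- probabilistic Strichartz control of the linear evolution $\bz^\o = S(t)(\bu_{0,T}^\o,\bu_{1,T}^\o)$, an a priori energy bound for $\bv^\o$, and then iteration of a deterministic local well-posedness result whose existence time depends only on $\|(\bv_0,\bv_1)\|_{\dot{\H}^1}$ and the size of the forcing --- is exactly the paper's strategy for $d=4,5$ (Lemma \ref{prop:energy} plus Lemma \ref{LEM:pLWP} applied on a partition of $[0,T]$ into intervals of length $\tau_*$ chosen via Proposition \ref{PROP:Str}). Your description of the probabilistic step, including the obstruction from \eqref{Hs0} and the resolution via Lemma \ref{LEM:HC} with the $\jb{T}^{d/2}$ volume loss, also matches Section \ref{SEC:Str}.

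There is, however, a genuine gap for $d=3$. Your energy step claims that each cross term in $|\bv+\bz|^{\frac{4}{d-2}}(\bv+\bz)-|\bv|^{\frac{4}{d-2}}\bv$, paired against $\dt\bv$, ``is controlled by H\"older in terms of $E(\bv)^{1/2}$ and the Strichartz norms of $F^\o$.'' This is true for the cubic ($d=4$) and the $7/3$-power ($d=5$) nonlinearities, but fails for the quintic one: the worst term is
\begin{equation*}
\int_{\R^3} \dt\bv^\o \, (\bv^\o)^4\, \bz^\o\, dx
\;\leq\; \|\dt\bv^\o\|_{L^2_x}\,\|\bz^\o\|_{L^\infty_x}\,\|\bv^\o\|_{L^8_x}^4,
\end{equation*}
and $\|\bv^\o\|_{L^8_x(\R^3)}$ is not controlled by $E(\bv^\o)^{1/2}$ (the energy only gives $L^6_x$ and $\dot H^1_x$, and $\dot H^1(\R^3)\not\hookrightarrow L^8$), so Gronwall does not close. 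This is precisely why the paper treats $d=3$ separately in Subsection \ref{SUBSEC:3d}: one first proves a \emph{uniform} probabilistic energy bound for the smooth solutions $\bv_N^\o$ with frequency-truncated data (Lemma \ref{PROP:Penergy}, following Proposition 4.1 in \cite{OP}, whose proof uses the $L^\infty_t$ estimates of Proposition \ref{PROP:infty} in an essential way --- indeed this is the reason Proposition \ref{PROP:infty} appears at all), and then passes to the limit via the deterministic convergence result Lemma \ref{PROP:pLWP2}. Your proposal as written would only yield the cases $d=4,5$; to cover $d=3$ you need to replace the direct Gronwall argument by this approximation scheme.
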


The main idea is to adapt the argument in 
\cite{Poc, OP}
on almost sure global well-posedness of \eqref{NLW} on $\R^d$
with random initial data of the form \eqref{RR1}.
Denoting the linear
 and nonlinear parts of the solution $\bu^\o$
to  \eqref{NLW3}
by 
\begin{equation}
\bz^{\omega}(t) =
\bz_T^{\omega}(t):=
S(t)(\bu_{0, T}^{\omega}, \bu_{1, T}^{\omega})
\qquad \text{and}
\qquad \bv^\omega:=\bu^\omega-\bz^\omega, 
\label{z}
\end{equation}

\noi
we can reformulate  \eqref{NLW3} as the following  perturbed
NLW: 
\begin{equation}\label{NLW4}
\begin{cases}
\pa_{t}^2 \bv^\omega-\Delta \bv^\omega+F(\bv^\omega+\bz^\omega)=0\\
(\bv^\omega,\pa_t \bv^\omega)|_{t=0}=(0,0), 
\end{cases}
\end{equation}

\noi
where $F(u) = |u|^{\frac{4}{d-2}}u$.
As mentioned above,  the argument in \cite{Poc, OP}
can be divided into two parts:
 (i) the probabilistic part
and (ii) the deterministic study of the perturbed NLW: 
\begin{equation}\label{NLW5}
\begin{cases}
\pa_{t}^2 \bv-\Delta \bv+F(\bv+f) = 0
\\
(\bv,   \pa_t \bv)\big|_{t = 0} = (\bv_0, \bv_1), 
\end{cases}
\end{equation} 

\noi
where  $f$ is a deterministic function, 
satisfying some a priori space-time bounds. 
This deterministic part (Proposition 4.3 in \cite{Poc} and Proposition 5.2 in \cite{OP}) can be
applied to our problem without any change,
and hence we take it as a black box in this paper.
Therefore, our main task is to appropriately modify the probabilistic part of the argument.

In the next section, we prove new probabilistic Strichartz estimates
(Propositions \ref{PROP:Str} and  \ref{PROP:infty}), 
controlling the size of 
the random linear solution 
$S(t)(\bu_{0, T}^{\omega}, \bu_{1, T}^{\omega})$
on $\R^d$ in terms of the deterministic initial data $(u_0, u_1)$ on $\T^d$.
Then, in 
Section \ref{SEC:4d}, 
we briefly discuss how to modify the argument in \cite{Poc, OP}
to prove Proposition \ref{PROP:GWP2}.
In Appendix \ref{SEC:APP}, 
we consider the issue on the finite speed of propagation
on random solutions at a low regularity
and show how to deduce Proposition \ref{PROP:GWP} from Proposition \ref{PROP:GWP2}.
Finally, 
we sketch the uniqueness part of Theorem \ref{THM:GWP}
in Appendix \ref{SEC:uniq}.

Let us conclude this section by stating a lemma, 
which 
 allows us to compare the $H^s$-norms
 of a periodic function 
on $\R^d$ and $\T^d$ through the multiplication by $\et$.

\begin{lemma}\label{LEM:Hs}

Let $0 \leq s <1$.
Then, there exist $C>0$ such that 
\begin{align}
\frac{1}{C}  \jb{T}^\frac{d}{2} \| f \|_{H^s(\T^d)}\|
\leq \|\et f\|_{H^s(\R^d)} \leq C \jb{T}^\frac{d}{2} \| f \|_{H^s(\T^d)}, 
\label{embed0}
\end{align}
	
\noi
for any $T >0$ and  any periodic function $f\in H^s(\T^d)$.
\end{lemma}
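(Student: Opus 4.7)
The plan is to prove the upper and lower bounds separately, with the main difficulty in the lower bound for $s\in(0,1)$.

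For the upper bound, I would first establish the endpoints $s=0$ and $s=1$ by direct computation using $\|\eta_T\|_{L^\infty}\lesssim 1$, $\|\nabla\eta_T\|_{L^\infty}\lesssim \langle T\rangle^{-1}$, and the tiling estimate $\int_{\supp\eta_T}|g|^2\,dx \lesssim \langle T\rangle^d \|g\|_{L^2(\T^d)}^2$ valid for any periodic function $g$ (obtained by covering $\supp\eta_T$, which has volume $\sim\langle T\rangle^d$, by translates of a fundamental domain). For $s=1$, the product rule $\nabla(\eta_T f)=\eta_T\nabla f + f\nabla\eta_T$ together with these bounds readily gives $\|\eta_T f\|_{H^1(\R^d)}\lesssim \langle T\rangle^{d/2}\|f\|_{H^1(\T^d)}$. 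Complex interpolation of the linear operator $f\mapsto \eta_T f$, using $[L^2,H^1]_\theta=H^\theta$ on both $\T^d$ and $\R^d$, then gives the upper bound for all intermediate $s$.

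The lower bound at $s=0$ is immediate: since $\eta_T\equiv 1$ on $[-\langle T\rangle,\langle T\rangle]^d$, which contains $\sim \langle T\rangle^d$ translates of a fundamental domain of $\T^d$, periodicity yields $\|\eta_T f\|_{L^2(\R^d)}^2\ge \int_{[-\langle T\rangle,\langle T\rangle]^d}|f|^2\,dx \gtrsim \langle T\rangle^d\|f\|_{L^2(\T^d)}^2$. For $s\in(0,1)$, I would invoke the Gagliardo seminorm characterization
\[
\|g\|_{H^s(\R^d)}^2\sim \|g\|_{L^2(\R^d)}^2 + \iint \frac{|g(x)-g(y)|^2}{|x-y|^{d+2s}}\,dx\,dy,
\]
and restrict the double integral to $x,y\in[-\langle T\rangle,\langle T\rangle]^d$ (where $\eta_T\equiv 1$), further to $|y-x|\le\delta$ for a small fixed $\delta>0$ smaller than one period. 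Changing variables to $z=y-x$, the $x$-integration over $[-\langle T\rangle+\delta,\langle T\rangle-\delta]^d$ covers $\sim\langle T\rangle^d$ translates of a fundamental domain, so periodicity of $f$ reduces it to $\sim\langle T\rangle^d\int_{\T^d}|f(x)-f(x+z)|^2\,dx$. Expanding the inner integral via the Fourier series of $f$, the remaining $z$-integral reduces to $\int_{|z|\le\delta}|e^{in\cdot z}-1|^2|z|^{-d-2s}\,dz$, which via the substitution $z\mapsto z/|n|$ and $|e^{i\theta}-1|^2\gtrsim \theta^2$ is bounded below by $c|n|^{2s}$ for $n\neq 0$ (here $s<1$ ensures integrability near $z=0$). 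This yields $\gtrsim\langle T\rangle^d\|f\|_{\dot H^s(\T^d)}^2$, and combined with the $L^2$ lower bound gives $\|\eta_T f\|_{H^s(\R^d)}^2\gtrsim \langle T\rangle^d\|f\|_{H^s(\T^d)}^2$.

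The main obstacle is the lower bound for $s\in(0,1)$: the nonlocal nature of $H^s$ obscures how to extract a factor $\langle T\rangle^d$ while preserving fractional regularity. The Gagliardo formulation bypasses this by allowing us to localize to a single period cell via the cutoff $|y-x|\le\delta$, after which periodicity immediately yields the tiling factor.
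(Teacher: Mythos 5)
Your proof is correct, and it splits into one half that mirrors the paper and one half that does not. For the lower bound your route is essentially the paper's: both arguments rest on the difference-quotient (Gagliardo) characterization of $\dot H^s$, the fact that $\et \equiv 1$ on a box of volume $\sim \jb{T}^d$, and periodicity to produce the factor $\jb{T}^d$. The only difference is that the paper simply quotes the equivalence of the Gagliardo seminorm with $\|\cdot\|_{\dot H^s(\T^d)}$ (citing \cite{Ho, BO2}) and compares the two seminorms directly, whereas you re-derive the torus-side lower bound by hand from the Fourier series, via $\int_{|z|\le \delta} |e^{in\cdot z}-1|^2 |z|^{-d-2s}\,dz \gtrsim |n|^{2s}$; that computation is right (the substitution $z \mapsto z/|n|$ only enlarges the domain since $|n|\ge 1$, and $s<1$ gives integrability of $r^{1-2s}$ at $r=0$), so this is a self-contained but equivalent version of the same step. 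For the upper bound your approach is genuinely different: the paper works entirely on the Fourier side, writing $\wt{\et f}(\xi) = \sum_n \fet(\xi-n)\ft f(n)$ and running the triangle inequality $\jb{\xi}^s \le \jb{\xi-n}^s\jb{\xi}^s$ plus Young's inequality against the modulation-space quantity $\|\jb{\cdot}^s \fet\|_{\l^1_m L^2(Q_m)} \les \jb{T}^{d/2}\|\eta\|_{M^s_{2,1}}$, which handles all $s\ge 0$ in one stroke; you instead prove the endpoints $s=0$ and $s=1$ by the product rule and tiling and then complex-interpolate the multiplication operator $f\mapsto \et f$. Your version is more elementary (no modulation spaces) and exploits the extra gain $\|\nabla \et\|_{L^\infty}\les \jb{T}^{-1}$, but it is confined to $0\le s\le 1$ by the choice of endpoints, while the paper's Fourier-side bound extends to any $s\ge 0$; for the range $0\le s<1$ of the lemma both are adequate.
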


\begin{proof}

Given $m \in \Z^d$ and $T >0 $, set 
 $\jb{T}Q_m := \{ \xi \in \R^d: \, \jb{T}^{-1} \xi \in  Q_m \}$.
Then, for  $s\geq 0$, it follows from \eqref{eta} that
\begin{align}
\label{embed1}
\|\jb{\,\cdot \,}^{s}\fet \|_{L^2(Q_m)}
\leq \jb{T}^\frac{d}{2} 
\|\jb{\,\cdot \,}^{s}\wt \eta \|_{L^2(\jb{T} Q_m)}
\les \jb{T}^\frac{d}{2} 
\sum_{\substack{k \in \Z^d\cap\jb{T}Q_m}} \jb{k}^s 
\|\psi(D-k) \eta \|_{L^2(\R^d)}, 
\end{align}

\noi
where $\psi$ is as in \eqref{Zpsi}.
Then, 
by \eqref{FT1}, the triangle inequality (with $s \geq 0$),  
Minkowski's integral inequality, Young's inequality, and \eqref{embed1}, we have 
\begin{align}
\|\et f\|_{H^s(\R^d)} 
& = \Bigg(\int \jb{\xi}^{2s} \bigg|
 \sum_{n \in \Z^d}  \fet(\xi - n)  \ft f(n)\bigg|^2d\xi\Bigg)^\frac{1}{2} \notag \\
& \les \Bigg(\int \bigg(
 \sum_{n \in \Z^d}  \jb{\xi-n}^{s} |\fet(\xi - n)| 
\cdot  \jb{n}^{s}  |\ft f(n)|\bigg)^2d\xi\Bigg)^\frac{1}{2} \notag \\
& \les \Bigg(\sum_{m \in \Z^d} \int_{Q_m} 
 \bigg(
 \sum_{n \in \Z^d}  \jb{\xi-n}^{s} |\fet(\xi - n)| 
\cdot  \jb{n}^{s}  |\ft f(n)|\bigg)^2 d\xi\Bigg)^\frac{1}{2}\notag \\
& \les \Bigg(\sum_{m \in \Z^d} 
 \bigg(
 \sum_{n \in \Z^d} 
 \|\jb{\,\cdot \,}^{s}\fet \|_{L^2(Q_{m-n})}
 \jb{n}^{s}  |\ft f(n)|\bigg)^2 \Bigg)^\frac{1}{2}\notag \\
& \leq 
 \|\jb{\,\cdot\, }^{s}\fet \|_{\l^1_m L^2(Q_m)} \|f\|_{H^s(\T^d)}
 \les \jb{T}^\frac{d}{2}\|\eta\|_{M^s_{2, 1}} \|f\|_{H^s(\T^d)}.
\label{embed1a}
\end{align}

\noi
Here, $M^s_{2, 1}$
denotes the (weighted) modulation space defined by the norm
\[ \|\eta\|_{M^s_{2, 1}} = 
\Big\| \jb{n}^{s} \| \psi(D-n) \eta\|_{L^2(\R^d)}\Big\|_{\l^1(\Z^d)}.\]

Let $\T_{_T}^d : = \big[-T-\frac 12 , T+\frac 12\big)^d$.
Then, by the definition of $\et$, we have 
\begin{align}
\jb{T}^\frac{d}{2} \| f \|_{L^2(\T^d)}
\sim \| f \|_{L^2(\T_{_T}^d)}
\leq \| \et f \|_{L^2(\R^d)}.
\label{embed2}
\end{align}

\noi
By the characterization of the $\dot H^s$-norms on the physical side
(see, for example,  \cite{Ho} and \cite{BO2} on $\R^d$ and  $\T^d$, respectively), the periodicity of $f$,
and the definition of $\et $, we have 
\begin{align}
\jb{T}^\frac{d}{2} \| f \|_{\dot H^s(\T^d)}
& \sim
\jb{T}^\frac{d}{2}
\bigg(\int_{\T^d} \int_{Q_0} \frac{|f(x+y) - f(x)|^2}{|y|^{d + 2s}}dy dx\bigg)^\frac{1}{2} \notag \\
& \sim
\bigg(\int_{\T_{T}^d} \int_{Q_0} \frac{|f(x+y) - f(x)|^2}{|y|^{d + 2s}}dy dx\bigg)^\frac{1}{2}\notag \\
& \leq 
\bigg(\int_{\R^d} \int_{Q_0} \frac{|\et(x+y) f(x+y) -\et(x) f(x)|^2}{|y|^{d + 2s}}dy dx\bigg)^\frac{1}{2}\notag\\
& \leq 
\bigg(\int_{\R^d} \int_{\R^d} \frac{|\et(x+y) f(x+y) -\et(x) f(x)|^2}{|y|^{d + 2s}}dy dx\bigg)^\frac{1}{2}\notag\\
& \sim \|\et f \|_{\dot H^s(\R^d)}
\label{embed3}
\end{align}

\noi
for $0 < s < 1$.	
Hence,  \eqref{embed0}
follows from \eqref{embed1a}, \eqref{embed2},  and \eqref{embed3}.
\end{proof}

\section{Probabilistic Strichartz estimates}
\label{SEC:Str}

In this section, we state and prove the crucial probabilistic Strichartz estimates
that build a bridge between the random linear solution
$S(t)(\bu_{0, T}^{\omega}, \bu_{1, T}^{\omega})$
on $\R^d$ and the  deterministic initial data $(u_0, u_1)$ on $\T^d$.
In  \cite{Poc, OP}, 
we studied the probabilistic Strichartz estimates
on $\R^d$ with random initial data of the form \eqref{RR1} (Proposition 2.3 in \cite{Poc}
and Proposition 3.3 in \cite{OP}).
The following propositions (Propositions \ref{PROP:Str} and  \ref{PROP:infty})
are suitable replacements  for our problem at hand.
In particular, we have the $\H^s(\T^d) $-norm of $(u_0, u_1)$ on $\T^d$
on the right-hand side of \eqref{Str0} and \eqref{infty00}.

\begin{proposition}\label{PROP:Str}
Let $T>0$.
Given  $(u_0, u_1)\in \H^s(\T^d)$,
let $(\bu_{0, T}^\o, \bu_{1, T}^\o)$ 
be the randomization on $\R^d$  defined in \eqref{R3}, satisfying \eqref{cond}. 
 Then, given $1\leq q< \infty$, $2\leq r\leq \infty$,
there exist $C,c>0$ such that
\begin{align}
P\Big(\|S(t)(\bu_{0, T}^\omega, \bu_{1, T}^\omega& )\|_{L^q_t(I; L^r_x(\R^d))}>\ld\Big) \notag \\
& \leq C\exp\Bigg(-c\frac{\ld^2}{\max(1,  b^2) \jb{T}^d |I|^{\frac2q}
\|(u_0, u_1)\|_{\mathcal{H}^s(\T^d)}^2}\Bigg)
\label{Str0}
\end{align}

\noi
for  any compact time interval $I=[a,b]\subset [0, T]$, 
provided 
\textup{(i)} $s = 0$ if $r < \infty$
and \textup{(ii)}  $s > 0$ if $r = \infty$.

\end{proposition}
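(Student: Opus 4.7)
The plan is to establish the $p$-th moment bound
\[
\bigl\|S(t)(\bu_{0,T}^\omega, \bu_{1,T}^\omega)\bigr\|_{L^p(\Omega;\, L^q_t(I;\, L^r_x(\R^d)))} \les \sqrt{p}\,\max(1,b)\,\jb{T}^{d/2}\,|I|^{1/q}\,\|(u_0,u_1)\|_{\mathcal{H}^s(\T^d)}
\]
uniformly in $p \ge \max(q, r, 2)$, from which \eqref{Str0} will follow by Chebyshev's inequality and optimization in $p \sim (\lambda/\sigma)^2$, a standard reduction (cf.\ \cite{Poc, OP}).

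The exponential-moment hypothesis \eqref{cond} yields a Khinchine-Kahane inequality $\bigl\|\sum_n g_{n,j}a_n\bigr\|_{L^p(\Omega)} \les \sqrt p\,(\sum_n |a_n|^2)^{1/2}$ for any Hermitian sequence ($a_{-n} = \overline{a_n}$) and $p \ge 2$. Writing
\[
S(t)(\bu_{0,T}^\omega, \bu_{1,T}^\omega)(x) = \sum_{j=0,1}\sum_{n\in\Z^d} g_{n,j}(\omega)\,\ft u_j(n)\,K_j(t,x,n),
\]
with $K_0(t,x,n) := \cos(t|\nabla|)[\et(y) e^{i n\cdot y}](x)$ and $K_1(t,x,n) := \tfrac{\sin(t|\nabla|)}{|\nabla|}[\et(y) e^{i n\cdot y}](x)$, the first step is to apply Minkowski's integral inequality (valid for $r < \infty$ and $p \ge \max(q, r)$) to pass $L^p_\omega$ inside $L^q_t L^r_x$, followed by the pointwise Khinchine bound in $(t,x)$, and then a second Minkowski step in $L^{q/2}_t L^{r/2}_x$ (valid for $q, r \ge 2$; the case $1 \le q < 2$ is reduced to $q = 2$ by H\"older in time at a cost of $|I|^{1/q - 1/2}$) to extract the $\ell^2_n$-sum. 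This yields
\[
\bigl\|S(t)(\bu_{0,T}^\omega,\bu_{1,T}^\omega)\bigr\|_{L^p_\omega L^q_t L^r_x} \les \sqrt{p} \biggl(\sum_{n,j}|\ft u_j(n)|^2 \|K_j(\cdot,\cdot,n)\|_{L^q_t L^r_x}^2\biggr)^{1/2}.
\]

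The remaining task is to bound $\|K_j(\cdot,\cdot,n)\|_{L^q_t L^r_x}$ uniformly in $n$, with a $\jb{n}^{-1}$ gain when $j = 1$ that absorbs the $H^{-1}(\T^d)$-weight for $u_1$. Three elementary ingredients suffice. First, the $L^2$-isometry of $\cos(t|\nabla|)$ gives $\|K_0(t,\cdot,n)\|_{L^2_x} \le \|\et\|_{L^2} \les \jb{T}^{d/2}$. Second, the pointwise bound $|\sin(t|\xi|)/|\xi|| \les \max(1,|t|)\jb{\xi}^{-1}$ combined with Plancherel, Peetre's inequality, and the scaling identity $\|\et\|_{H^1(\R^d)}^2 \les \jb{T}^d$ yields $\|K_1(t,\cdot,n)\|_{L^2_x} \les \max(1,|t|)\jb{n}^{-1}\jb{T}^{d/2}$. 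Third, Fourier inversion together with the $T$-independent bound $\|\fet\|_{L^1(\R^d)} = \|\wt\eta\|_{L^1}$ gives the pointwise estimate $\|K_j(t,\cdot,n)\|_{L^\infty_x} \les \max(1,|t|)^{\mathbf 1_{j=1}}\jb{n}^{-j}$. Interpolating between $L^2_x$ and $L^\infty_x$ then produces $\|K_j(t,\cdot,n)\|_{L^r_x} \les \max(1,|t|)^{\mathbf 1_{j=1}}\jb{n}^{-j}\jb{T}^{d/r}$ for $2 \le r < \infty$, and integrating over $I = [a,b]$ supplies $|I|^{1/q}\max(1,b)^{\mathbf 1_{j=1}}$. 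Summing in $n$ via Parseval on $\T^d$, and using $\jb{T}^{d/r} \le \jb{T}^{d/2}$ together with the embeddings $H^s(\T^d) \hookrightarrow L^2(\T^d)$ and $H^{s-1}(\T^d) \hookrightarrow H^{-1}(\T^d)$ for $s \ge 0$, yields the claimed moment bound.

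The main obstacle lies in the endpoint $r = \infty$ (which forces $s > 0$), where Minkowski's integral inequality cannot swap $L^p_\omega$ past $L^\infty_x$. The plan there is to apply a Littlewood-Paley decomposition $F = \sum_N \P_N F$ and invoke Bernstein's inequality $\|\P_N F\|_{L^\infty_x} \les N^{d/r_0}\|\P_N F\|_{L^{r_0}_x}$ for a fixed finite $r_0 > d/s$, reducing each dyadic block to the $r < \infty$ analysis above. Since $\P_N[\et(y)e^{in\cdot y}]$ is essentially supported at $|n| \sim N$ (up to rapidly decaying Schwartz tails from $\wt\eta$), the extra $N^{d/r_0}$ factor is compensated by the regularity weight $\jb{n}^s$ inherent in the $\mathcal H^s$-norm of the data, and summability $\sum_N N^{d/r_0 - s} < \infty$ holds precisely because $s > 0$.
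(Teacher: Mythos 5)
Your proposal is correct and follows essentially the same route as the paper: a $p$-th moment bound via Minkowski's inequality and the Khinchine-type estimate of Lemma \ref{LEM:HC}, reduction to kernel bounds on $\cos(t|\nabla|)(\et e^{in\cdot x})$ and $\frac{\sin(t|\nabla|)}{|\nabla|}(\et e^{in\cdot x})$ with the gain $\jb{n}^{-1}$ extracted from $|\sin(t|\xi|)/|\xi||\les \max(1,|t|)\jb{\xi-n}/\jb{n}$, and Chebyshev's inequality at the end. The only (harmless) local differences are that you obtain the $L^r_x$ kernel bound by $L^2$--$L^\infty$ interpolation where the paper uses Hausdorff--Young and H\"older with the weight $\jb{\xi-n}^\beta$, and you treat $r=\infty$ by Littlewood--Paley plus Bernstein where the paper uses the Sobolev embedding $\|F\|_{L^\infty_x}\les\|\jb{\nabla}^sF\|_{L^{\wt r}_x}$ combined with $\jb{\xi}^s\les\jb{\xi-n}^s\jb{n}^s$.
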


\begin{remark}\rm
Let $ (\bu_{0, T}, \bu_{1, T}):=(\et u_0, \et u_1)$ as in \eqref{NLW2}.
Then, in view of Lemma \ref{LEM:Hs}, we can rewrite \eqref{Str0} as 
\begin{align}
P\Big(\|S(t)(\bu_{0, T}^\omega, \bu_{1, T}^\omega& )\|_{L^q_t(I; L^r_x(\R^d))}>\ld\Big) \notag \\
& \leq C\exp\Bigg(-c\frac{\ld^2}{\max(1,  b^2)  |I|^{\frac2q}
\|(\bu_{0, T}, \bu_{1, T})\|_{\mathcal{H}^s(\R^d)}^2}\Bigg).
\label{Str0a}
\end{align}

\noi
We point out that \eqref{Str0a} 
is more in the spirit of  the statement of Proposition 2.3 (ii) and (iii) in \cite{Poc}.

\end{remark}

Before presenting the proof of Proposition \ref{PROP:Str}, 
we
first recall the following probabilistic estimate.
See \cite{BTI} for the proof.

\begin{lemma}\label{LEM:HC}
Let $\{g_n\}_{n\in \Z^d}$ be a sequence of mean zero complex-valued
 random variables  
such that $g_{-n}=\overline{g_n}$ for all 
$n\in\Z^d$.
With $\mathcal{I}$  as in \eqref{Index}, 
assume that $g_0$, $\Re g_n$, and $\Im  g_n$,
$n\in\mathcal I$, 
are independent. 
Moreover, 
assume that 
\eqref{cond} is satisfied.
Then, there exists $C>0$ such that the following holds:
\begin{equation*}
\Big\|\sum_{n\in\Z^d}g_n(\omega)c_n\Big\|_{L^p(\Omega)}\leq C\sqrt{p} \|c_n\|_{\ell^2_n(\Z^d)}
\end{equation*}

\noi
for any $p\geq 2$
and any sequence $\{c_n\} \in \ell^2 (\mathbb{Z}^d)$ satisfying $c_{-n}=\overline{c_n}$
for all $n\in\Z^d$.
\end{lemma}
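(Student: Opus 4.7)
The plan is a standard subgaussian moment argument, with the only subtlety being to honor the reality constraint $g_{-n} = \overline{g_n}$ and $c_{-n} = \overline{c_n}$ before invoking the exponential moment bound \eqref{cond}.

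First, I would split the index set using $\Z^d = \mathcal{I} \cup (-\mathcal{I}) \cup \{0\}$ and exploit the Hermitian symmetry to collapse the $\pm n$ contributions. Noting that $\overline{g_n c_n} = g_{-n} c_{-n}$, the series $S(\omega) := \sum_{n \in \Z^d} g_n(\omega) c_n$ is real-valued and can be rewritten as
\begin{equation*}
S(\omega) = g_0(\omega) c_0 + 2\sum_{n \in \mathcal{I}} \bigl[(\Re g_n(\omega))(\Re c_n) - (\Im g_n(\omega))(\Im c_n)\bigr].
\end{equation*}
By hypothesis, the random variables $g_0$ and $\{(\Re g_n, \Im g_n)\}_{n \in \mathcal{I}}$ are jointly independent, so $S$ is a sum of independent (real) terms, which is the setting needed for the moment generating function computation.

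Next, I would obtain a subgaussian bound on the moment generating function of $S$. For each $t \in \R$ and $n \in \mathcal{I}$, setting $\gamma_n := (2 t\, \Re c_n, \, -2t\, \Im c_n) \in \R^2$ and applying \eqref{cond} to the distribution $\mu_n$ of the vector $(\Re g_n, \Im g_n)$ gives
\begin{equation*}
\E\bigl[e^{2t[(\Re g_n)(\Re c_n) - (\Im g_n)(\Im c_n)]}\bigr] = \int e^{\gamma_n \cdot x} d\mu_n(x) \leq e^{c |\gamma_n|^2} = e^{4 c t^2 |c_n|^2},
\end{equation*}
and similarly $\E[e^{t g_0 c_0}] \leq e^{c t^2 |c_0|^2}$. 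By independence,
\begin{equation*}
\E[e^{t S}] \leq \exp\bigl( C t^2 \|c_n\|_{\ell^2(\Z^d)}^2 \bigr)
\end{equation*}
for some absolute constant $C$.

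Finally, I would convert this subgaussian MGF bound into the desired $L^p$ estimate by the standard Chebyshev/optimization argument: for $\lambda > 0$, optimizing $t \mapsto e^{-t\lambda} \E[e^{tS}]$ in $t > 0$ (and symmetrically for $-S$) yields the tail bound
\begin{equation*}
P(|S| > \lambda) \leq 2 \exp\Bigl(-\tfrac{\lambda^2}{4C \|c_n\|_{\ell^2}^2}\Bigr),
\end{equation*}
and then integrating $\|S\|_{L^p}^p = \int_0^\infty p \lambda^{p-1} P(|S| > \lambda)\, d\lambda$ and a change of variables give $\|S\|_{L^p(\Omega)} \leq C' \sqrt{p}\, \|c_n\|_{\ell^2(\Z^d)}$ for all $p \geq 2$. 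The only delicate point in the whole argument is the reality/independence bookkeeping in the first step; once the series is written as a sum over the independent block $\{g_0\} \cup \{(\Re g_n, \Im g_n)\}_{n \in \mathcal I}$, the rest is the textbook subgaussian-to-$L^p$ passage and involves no new ideas. A convergence remark for the formally infinite sum (e.g., first proving the bound for finite truncations and passing to the limit in $L^2(\Omega)$, which exists since $\{c_n\} \in \ell^2$) completes the argument.
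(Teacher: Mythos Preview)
Your argument is correct: the reduction to independent real blocks via the Hermitian symmetry, the application of \eqref{cond} to bound the moment generating function, and the Chebyshev/layer-cake passage to $L^p$ are all sound, and the truncation remark handles the convergence issue cleanly. The paper itself does not give a proof of this lemma at all; it simply refers the reader to Burq--Tzvetkov \cite{BTI}, where exactly this subgaussian-to-$L^p$ argument is carried out, so your proposal is in line with the intended proof.
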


\begin{proof}[Proof of Proposition \ref{PROP:Str}]
The proof is analogous to that of Proposition 2.3 in \cite{Poc}.
There are, however, important differences 
due to the fact that $\wt \eta$ does not have a compact support
and that we use the $\H^s(\T^d)$-norm of $(u_0, u_1)$
on the right-hand side of \eqref{Str0}.

\smallskip

\noi
$\bullet$ {\bf Case 1:} We first consider the case $r < \infty$.
Given  $1\leq q<\infty$ and  $2\leq r <\infty$, 
let  $p\geq\max (q,r)$.

Let $T_m$ be the Fourier multiplier operator
with a bounded multiplier $m$.
Let $\beta = \frac{(r-2)}{2r} d+\eps \leq \frac d2 $
for some small $\eps > 0$.
Then, 
by Hausdorff-Young's inequality
and H\"older's inequality
with $\frac{1}{r'} = \frac{1}{2} + \frac{r-2}{2r}$, 
we have
\begin{align}
\| T_m (\et e^{inx}) \|_{L^r_x(\R^d)}
& \leq 
\| m(\xi)  \fet(\xi-n)\|_{L^{r'}_\xi(\R^d)}
 \les 
\| \jb{\xi - n}^\be m(\xi)  \fet(\xi-n)\|_{L^{2}_\xi(\R^d)}\notag \\
& \les \|m\|_{L^\infty(\R^d)} \| \et  \|_{H^{\frac{d}{2}}(\R^d)}
\les \jb{T}^\frac{d}{2} \|m\|_{L^\infty(\R^d)}\| \eta  \|_{H^{\frac{d}{2}}(\R^d)}
\label{Str2}
\end{align}

\noi
for each $n \in \Z^d$.
Note that we have  
\begin{align}
|\cos (t|\xi|)|\leq 1\qquad 
\text{and}
\qquad 
\bigg|\frac{\sin (t|\xi|)}{|\xi|}\bigg|\leq t
\label{Str3}
\end{align}

\noi
 for all $\xi \in \R^d \setminus \{0\}$
and all $t \in \R$.
Then,  
by Minkowski's integral inequality, Lemma \ref{LEM:HC}
with  \eqref{R3},  and \eqref{Str2} with \eqref{Str3}, we have 
\begin{align}
\Big(\E  \big\| & \cos(  t|\nb|)
 \bu_{0, T}^\omega
\big\|^p_{L^q_t(I; L^r_x(\R^d))}\Big)^{\frac 1p}
\leq \Big\|\|\cos(t|\nb|)
\bu_{0, T}^\omega\|_{L^p(\Omega)}\Big\|_{L^q_IL^r_x} \notag \\
&\les \sqrt{p}  \Big\|\big\|\ft u_0(n) \cos (t|\nabla|)(\et   e^{inx})\big\|_{\ell^2_n}\Big\|_{L^q_IL^r_x}
\leq \sqrt{p}  \Big\|\big\| \cos (t|\nabla|)(\et   e^{inx})\big\|_{L^r_x}\cdot \ft u_0(n)\Big\|_{L^q_I\l^2_n} \notag \\
 & \les 
\sqrt{p} 
\jb{T}^\frac{d}{2}  |I|^\frac{1}{q}\| \eta  \|_{H^{\frac{d}{2}}(\R^d)}
\|u_0\|_{L^2(\T^d)}.
\label{Str4}
\end{align}

When $|\xi |\ges 1$, 
it follows from   the triangle inequality: $\jb{n} \leq \jb{\xi} \jb{\xi - n}$
that 
\begin{align}
\bigg|\frac{\sin(t|\xi|)}{ |\xi|}\bigg|
\les \frac{1}{\jb{\xi}}
\leq \frac{\jb{\xi - n}}{\jb {n}}
\label{Str5}
\end{align}

\noi
for all $n \in \Z^d$.
On the other hand, when $|\xi|\ll 1$, 
it follows from \eqref{Str3} that
\begin{align}
\bigg|\frac{\sin(t|\xi|)}{ |\xi|}\bigg|
\les 
\frac{t}{\jb{\xi}}
\leq 
t \frac{\jb{\xi - n}}{\jb{n}}
\label{Str6}
\end{align}

\noi
for all $n \in \Z^d$.
Hence, proceeding as before with  \eqref{Str5}  and \eqref{Str6}, 
we have 
\begin{align}
\Bigg(\E \bigg\|\frac{\sin (t|\nabla|)}{|\nabla|}
\bu_{1, T}^\omega
&  \bigg\|^p_{L^q_t(I; L^r_x(\R^d))}  \Bigg) ^{1/p}
 \les 
\sqrt{p} 
 \Bigg\|  \bigg\|\frac{\sin (t|\nabla|)}{ |\nabla|}\big(\et   e^{inx})\bigg\|_{L^r_x} \cdot \ft u_1(n) \Bigg\|_{L^q_I\l^2_n} \notag \\
 & \les 
\sqrt{p} 
\max(1, b)
 \Bigg\|  \|\jb{\xi-n}\fet(\xi - n)\|_{L^{r'}_x} \cdot \frac{\ft u_1(n)}{\jb{n}} \Bigg\|_{L^q_I\l^2_n} \notag \\
 & \les 
\sqrt{p} 
\max(1, b) \jb{T}^\frac{d}{2} |I|^\frac{1}{q}\| \eta  \|_{H^{\frac{d}{2}+1}(\R^d)}
\|u_1\|_{H^{-1}(\T^d)}.
\label{Str7}
\end{align}

\noi
Then, 
\eqref{Str0} follows from 
\eqref{Str4}, \eqref{Str7}, and 
a standard argument using Chebyshev's inequality.
See \cite{BOP1, Poc} for details.

\smallskip

\noi
$\bullet$ {\bf Case 2:}
Next, we consider the case $r = \infty$.
In this case, given small $s>0$, 
choose $\wt r \gg1$ such that $s\wt r > d$.
Then, by Sobolev embedding theorem, we have
\[\big\|S(t)(\bu_{0, T}^\omega,\bu_{1, T}^\omega)\big\|_{L^q_t(I; L^\infty_x)}
\les  \big\|\jb{\nb}^s S(t)(\bu_{0, T}^\omega,\bu_{1, T}^\omega)\big\|_{L^q_t(I; L^{\wt{r}}_x)}.\]

\noi
We now proceed as in Case 1 with $\wt{r}$ instead of $r$.
With the triangle inequality $\jb{\xi}^s \les \jb{\xi-n}^s \jb{n}^s$, 
we have 
\begin{align}
\Big(\E  \big\|\jb{\nb}^s\cos( & t|\nb|)
 \bu_{0, T}^\omega
\big\|^p_{L^q_t(I; L^{\wt r}_x)}\Big)^{\frac 1p}
  \les \sqrt p
\jb{T}^\frac{d}{2}  |I|^\frac{1}{q}\| \eta  \|_{H^{\frac{d}{2}+s}(\R^d)}
\|u_0\|_{H^s(\T^d)}
\label{Str8}
\end{align}

\noi
and
\begin{align}
\Bigg(\E \bigg\|\jb{\nb}^s\frac{\sin (t|\nabla|)}{|\nabla|}
& \bu_{1, T}^\omega
 \bigg\|^p_{L^q_t(I; L^{\wt r}_x(\T^d))}  \Bigg) ^{1/p} \notag \\
 & \les \sqrt p 
\max(1, b^2)\jb{T}^\frac{d}{2}  |I|^\frac{1}{q}\| \eta  \|_{H^{\frac{d}{2}+1+s}(\R^d)}
\|u_0\|_{H^{s-1}(\T^d)}.
\label{Str9}
\end{align}

\noi
Once again, 
\eqref{Str0} follows from 
\eqref{Str8}, \eqref{Str9} and 
a standard argument using Chebyshev's inequality.
\end{proof}

Next, we prove  a probabilistic estimate involving the 
$L^\infty_t$-norm.
This proposition replaces Proposition 3.3 in \cite{OP}
and plays an
important role in treating the three-dimensional case.
Define an operator $\wt S(t)$ on a pair $(f_0, f_1)$ of functions on $\R^d$ by 
\begin{equation}
 \wt S(t)(f_0, f_{1}) 
:=  -\frac{|\nb|}{\jb{\nb}}\sin (t|\nb|) f_{0} + \frac{ \cos (t|\nb|)}{\jb{\nb}}f_{1}.
\label{wlinear}
\end{equation}

\noi
Namely, we have $\dt   S(t)(f_0, f_{1}) =  \jb{\nb}\wt S(t)(f_0, f_{1}) $.

\begin{proposition}\label{PROP:infty}
Let $T\geq 1$.  Given  a pair   $(u_0, u_1)$ of real-valued functions defined on $\T^d$,
let $(\bu_{0, T}^{\omega}, \bu_{1, T}^\omega)$ be the randomization on $\R^d$ defined in \eqref{R3}, satisfying \eqref{cond}. 
Let $S^*(t) = S(t)$ or $\wt S(t)$ defined in \eqref{Zlinear} and \eqref{wlinear}, 
respectively.
Then, for  $2\leq r\leq\infty$, we have 
\begin{align}
P\Big( \|S^*(t) (\bu_{0, T}^\o, \bu_{1, T}^\o) & \|_{L^\infty_t([0, T];  L^r_x(\R^d))} > \ld\Big)\notag \\
 &  \leq C 
\jb{T} \exp \Bigg( -c \frac{\ld^2}{\jb{T}^{d+2} \|(u_0, u_1)\|^2_{\mathcal{H}^\eps (\T^d)}}\Bigg)
\label{infty00}
\end{align}

\noi
for any $\eps > 0$, 
where the constants $C$ and $c$ depend only on $r$ and $\eps$.

\end{proposition}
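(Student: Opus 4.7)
The plan is to reduce the $L^\infty_t$ estimate to the $L^q_t$ estimate of Proposition \ref{PROP:Str} via a time-discretisation combined with a fractional Sobolev embedding in time. Partition $[0,T]$ into $N\le\lceil T\rceil\lesssim \jb{T}$ subintervals $\{I_j\}$ of length at most one, so that $\|h\|_{L^\infty_t([0,T];L^r_x)}\le \max_j\|h\|_{L^\infty_t(I_j;L^r_x)}$ with $h(t):=S^*(t)(\bu_{0,T}^\omega,\bu_{1,T}^\omega)$. Fix parameters $0<\alpha<\alpha_0\le\eps$ with $\alpha q>1$ (for instance $\alpha_0=\eps$, $\alpha=\eps/2$, $q=3/\eps$). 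The fractional Sobolev embedding $W^{\alpha,q}_t(I_j)\hookrightarrow L^\infty_t(I_j)$, whose constant is independent of $j$ since $|I_j|\le 1$, gives $\|h\|_{L^\infty_t(I_j;L^r_x)}\lesssim \|h\|_{L^q_t(I_j;L^r_x)}+[h]_{W^{\alpha,q}(I_j;L^r_x)}$, where $[\,\cdot\,]$ denotes the intrinsic Slobodeckij seminorm.

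Proposition \ref{PROP:Str} applied with $|I_j|\le 1$ and $\max(1,b_j^2)\le \jb{T}^2$ controls the $L^q_t$-piece: after optimising $p$ in $L^p(\Omega)$, one obtains the bound $\sqrt{p}\,\jb{T}^{(d+2)/2}\|(u_0,u_1)\|_{\H^\eps(\T^d)}$. For the Slobodeckij piece, I would directly estimate the increment $h(t)-h(s)$ via the elementary multiplier-difference inequalities $|\cos(t|\xi|)-\cos(s|\xi|)|\lesssim |t-s|^{\alpha_0}|\xi|^{\alpha_0}$ and $|\sin(t|\xi|)/|\xi|-\sin(s|\xi|)/|\xi||\lesssim |t-s|^{\alpha_0}\jb{\xi}^{\alpha_0-1}$, obtained by interpolating the trivial bounds $|\cos(t|\xi|)-\cos(s|\xi|)|\le 2$ and $|\sin(t|\xi|)/|\xi|-\sin(s|\xi|)/|\xi||\le 2/|\xi|$ with the mean-value estimates $|t-s||\xi|$ and $|t-s|$, respectively, and exploiting $|t-s|\le 1$ inside $I_j$ to absorb the low-frequency contribution. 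Splitting $\jb{\xi}^{\alpha_0}\le \jb{\xi-n}^{\alpha_0}\jb{n}^{\alpha_0}$ and $\jb{\xi}^{\alpha_0-1}\le \jb{\xi-n}^{\alpha_0+1}\jb{n}^{\alpha_0-1}$ and repeating the proof of Proposition \ref{PROP:Str} then yields $(\mathbb{E}\|h(t)-h(s)\|_{L^r_x}^p)^{1/p}\lesssim \sqrt{p}\,|t-s|^{\alpha_0}\jb{T}^{d/2}\|(u_0,u_1)\|_{\H^{\alpha_0}(\T^d)}$. Substituting this into the Slobodeckij integral, the requirement $(\alpha_0-\alpha)q>0$ makes $\iint_{I_j\times I_j}|t-s|^{(\alpha_0-\alpha)q-1}\,dt\,ds$ finite and produces a matching bound for the seminorm. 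Combining both pieces, Chebyshev's inequality with the standard optimisation in $p$ (cf.\ \cite{BOP1,Poc}) gives the single-interval estimate, and a union bound over the $N\lesssim \jb{T}$ subintervals yields precisely the prefactor $\jb{T}$ in \eqref{infty00}.

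The case $r=\infty$ is reduced to the case $\tilde r<\infty$ via the spatial Sobolev embedding $L^\infty_x\hookrightarrow W^{s,\tilde r}_x$ with $s\tilde r>d$, as in Case~2 of the proof of Proposition \ref{PROP:Str}; choosing $s+\alpha_0\le\eps$ keeps the total regularity requirement inside $\H^\eps$. The case $S^*=\wt S$ is analogous and in fact easier, since the multipliers $|\xi|\sin(t|\xi|)/\jb{\xi}$ and $\cos(t|\xi|)/\jb{\xi}$ are uniformly bounded in $\xi$ and completely avoid the delicate low-frequency behaviour of $\sin(t|\xi|)/|\xi|$. The main obstacle is the tight interplay of parameters in the Slobodeckij step: one needs $\alpha q>1$ for the embedding yet $\alpha<\alpha_0\le\eps$ to keep the required regularity inside $\H^\eps$, forcing $q\sim 1/\eps$, which is precisely why the constants $C,c$ in \eqref{infty00} are allowed to depend on $\eps$. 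A more technical subtlety is that the multiplier-difference bound for $\sin(t|\xi|)/|\xi|$ fails to decay in $|\xi|$ near the origin, so the length-one partition is essential in order to convert the trivial estimate $|t-s|$ into $|t-s|^{\alpha_0}$ in the low-frequency regime.
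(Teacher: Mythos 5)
Your proof is correct, but it takes a genuinely different route from the paper's. The paper reduces Proposition \ref{PROP:infty} to Lemma \ref{LEM:infty}, which is a dyadic chaining argument: on each unit time interval one telescopes $\bz^\o(t)$ over nested grids of spacing $2^{-k}$, controls the max over the $2^k+1$ grid points by raising the moment to $q_k \sim \log 2^k + p + r$, and sums over Littlewood--Paley blocks using the gain $N^{-\eps}\min(1,2^{-k}N)$, which is where the $\eps$ of spatial regularity is spent. You instead invoke the fractional Sobolev embedding $W^{\alpha,q}_t \hookrightarrow L^\infty_t$ (Garsia--Rodemich--Rumsey) on each unit interval, feed the $L^q_t$ piece into Proposition \ref{PROP:Str}, and bound the Slobodeckij seminorm by H\"older-in-time increment estimates on the multipliers, spending the $\eps$ of spatial regularity through $\jb{\xi}^{\alpha_0} \le \jb{\xi-n}^{\alpha_0}\jb{n}^{\alpha_0}$; the low-frequency behaviour of $\sin(t|\xi|)/|\xi|$ is handled exactly as in the paper (the unit-length partition converting $|t-s|$ into $|t-s|^{\alpha_0}$ plays the role of the paper's \eqref{Str6}). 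Your route avoids the Littlewood--Paley square function and the $\sqrt{q_k}$ bookkeeping, at the cost of the $\eps$-dependent choice $q \sim 1/\eps$ (forced by $\alpha q > 1$ with $\alpha < \alpha_0 \le \eps$), which is harmless since the constants in \eqref{infty00} are allowed to depend on $\eps$; both arguments lose the same $\H^\eps$ regularity and produce the same $\jb{T}$ prefactor from the union bound and the same $\jb{T}^{d+2}$ from the $\max(1,b) \le \jb{T}$ factor in the $\sin(t|\xi|)/|\xi|$ multiplier. One shared technicality worth noting (present in the paper's telescoping identity \eqref{infty2} as well): to interpret $\|h\|_{L^\infty_t}$ one should either work first with frequency-truncated data and pass to the limit, or observe that the a.s.\ finiteness of the $W^{\alpha,q}_t$-norm furnishes a continuous modification; and to keep the embedding constant uniform you should take the subintervals of length comparable to $1$, not merely at most $1$.
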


Proposition \ref{PROP:infty}  follows as a corollary to the following lemma.
Let  $S_+(t)$ and $S_-(t)$ be the linear propagators for
the half wave equations on $\R^d$ defined by 
\[ S_{\pm}(t) f := 
\mathcal{F}_{\R^d}^{-1} \big(e^{\pm i|\xi|t} \wt f (\xi)\big).\]

\noi
Given $\phi \in H^s(\T^d)$ and $T\geq 1$, 
we define its randomization  $ 
\pphi_T^\o$  on $\R^d$ by 
\begin{align*}
\pphi_T^\omega := 
 \sum_{n \in \Z^d} \et (x) g_{n, 0}(\o) \ft \phi(n) e^{inx},
\end{align*}

\noi
as in  the first component of \eqref{R3}.
Then, we have the following tail estimate on 
the size of 
$S_\pm(t) \pphi^\o_T$
over a time interval of length 1.

\begin{lemma}\label{LEM:infty}
Let $T \geq 1$ and   $2\leq r \leq \infty$.
Given 
 any $\eps > 0$, 
there exist constants $C, c>0$, 
depending only on $r$ and $\eps$, 
such that
\begin{align}
&  P\Big( \|S_\pm(t) \pphi_T^\o\|_{L^\infty_t([j, j+1];  L^r_x(\R^d))} > \ld\Big)
  \leq C \exp \Bigg( -c \frac{\ld^2}{\jb{T}^d\|\phi\|^2_{H^\eps (\T^d)}}\Bigg), 
 \label{infty0a}\\
&  P\Big( \|\jb{\nb}^{-1}S_\pm(t) \pphi_T^\o\|_{L^\infty_t([j, j+1];  L^r_x(\R^d))} > \ld\Big)
  \leq C \exp \Bigg( -c \frac{\ld^2}{\jb{T}^d\|\phi\|^2_{H^{\eps-1} (\T^d)}}\Bigg), 
\label{infty0c}\\
&  P\Bigg( \bigg\|\frac{\sin(t |\nb|)}{|\nb|} \pphi_T^\o   \bigg\|_{L^\infty_t([j, j+1];  L^r_x(\R^d))}  > \ld\Bigg) \notag \\
& \hphantom{XXXXXXXXXXXXXXX}
 \leq C \exp \Bigg( -c \frac{\ld^2}{\max(1, j^2)\jb{T}^d\|\phi\|^2_{H^{\eps-1} (\T^d)}}\Bigg)
 \label{infty0b}
\end{align}

\noi
for any $[j, j+1] \subset [0, T]$.

\end{lemma}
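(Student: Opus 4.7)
My plan is to prove the three tail bounds by first establishing an $L^p(\Omega)$-moment estimate of the form
\[
\bigl\|S^\ast(t)\pphi_T^\omega\bigr\|_{L^p(\Omega;\,L^\infty_t([j,j+1];L^r_x))}\lesssim \sqrt{p}\,\sigma_{T,j},
\]
with $\sigma_{T,j}$ matching the variance in each of \eqref{infty0a}--\eqref{infty0b}, and then converting this to a Gaussian tail bound in the usual way: optimize $p$ in Chebyshev's inequality $P(X>\lambda)\le\lambda^{-p}(\sqrt{p}\sigma)^{p}$ and one obtains $C\exp(-c\lambda^{2}/\sigma^{2})$, exactly as in \cite{BOP1,Poc}. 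The main obstacle, which did not arise in Proposition \ref{PROP:Str} for $q<\infty$, is that $L^{\infty}_{t}$ prevents one from pulling the $L^{p}(\Omega)$-norm inside via Minkowski's integral inequality.

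To circumvent this, I will pay a small derivative loss via a one-dimensional Sobolev embedding in time. Since $[j,j+1]$ has unit length, for any $q>2/\varepsilon$ we have $W^{\varepsilon/2,q}_{t}([j,j+1])\hookrightarrow L^{\infty}_{t}([j,j+1])$ with constant depending only on $q$, and hence
\[
\bigl\|S^\ast(t)\pphi_T^\omega\bigr\|_{L^\infty_t L^r_x}
\lesssim \bigl\|S^\ast(t)\pphi_T^\omega\bigr\|_{L^q_t L^r_x}
+\bigl\|\jb{\dt}^{\varepsilon/2}S^\ast(t)\pphi_T^\omega\bigr\|_{L^q_t L^r_x}.
\]
For the half-wave propagator, the symbol identity $\jb{\dt}^{\varepsilon/2}S_{\pm}(t)=\jb{|\nb|}^{\varepsilon/2}S_{\pm}(t)$ converts the fractional time-derivative into a spatial one, reducing matters to an estimate of the same type as in Proposition \ref{PROP:Str} with the extra multiplier $\jb{\xi}^{\varepsilon/2}$. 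I will then take $p\geq\max(q,r)$, pull $L^{p}(\Omega)$ inside both space and time integrations by Minkowski, and invoke Lemma \ref{LEM:HC} on
\[
\jb{|\nb|}^{\varepsilon/2}S_{\pm}(t)\pphi_T^\omega(x)=\sum_{n\in\Z^{d}}g_{n,0}(\omega)\,\ft{\phi}(n)\,\bigl[\jb{|\nb|}^{\varepsilon/2}e^{\pm it|\nb|}(\et\, e^{in\cdot})\bigr](x)
\]
to gain the factor $\sqrt{p}$ and an $\ell^{2}_{n}$ over the deterministic coefficients.

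The per-frequency bound proceeds exactly as in the proof of Proposition \ref{PROP:Str}: by Hausdorff--Young and the triangle inequality $\jb{\xi}^{\varepsilon/2}\lesssim\jb{n}^{\varepsilon/2}\jb{\xi-n}^{\varepsilon/2}$,
\[
\bigl\|\jb{|\nb|}^{\varepsilon/2}S_{\pm}(t)(\et\, e^{in\cdot})\bigr\|_{L^{r}_{x}}
\lesssim \jb{n}^{\varepsilon/2}\bigl\|\jb{\xi-n}^{\varepsilon/2}\fet(\xi-n)\bigr\|_{L^{r'}_{\xi}}
\lesssim \jb{n}^{\varepsilon/2}\,\jb{T}^{d/2}\,\|\eta\|_{H^{d/2+\varepsilon/2}(\R^{d})},
\]
uniformly in $t$, and the $L^{q}_{t}$-norm over $[j,j+1]$ contributes only a factor of $1$. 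Summing against $\ft{\phi}(n)$ in $\ell^{2}_{n}$ produces $\jb{T}^{d/2}\|\phi\|_{H^{\varepsilon}(\T^{d})}$, proving \eqref{infty0a} after optimization in $p$. For \eqref{infty0c} the symbol acquires an extra factor $\jb{\xi}^{-1}$; the triangle inequality $\jb{\xi}^{-1}\leq\jb{n}^{-1}\jb{\xi-n}$ yields the gain of one spatial derivative and hence $\|\phi\|_{H^{\varepsilon-1}}$. For \eqref{infty0b}, I will invoke \eqref{Str5}--\eqref{Str6} to bound $|\sin(t|\xi|)/|\xi||\lesssim\max(1,|t|)\,\jb{\xi-n}/\jb{n}$, which on $t\in[j,j+1]$ contributes a factor $\max(1,j)$ at the level of the moment estimate; squaring inside the Chebyshev exponent produces the $\max(1,j^{2})$ factor in the stated tail. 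The main delicate point throughout is to ensure the time-Sobolev derivative loss $\varepsilon/2$ (and the Hausdorff--Young tail producing a half-derivative of $\fet$ at scale $d/2$) together consume at most $\varepsilon$ spatial derivatives of $\phi$, which is precisely what the freedom in choosing $q$ allows.
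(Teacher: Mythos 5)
Your proof is essentially correct, but it takes a genuinely different route from the paper. The paper proves Lemma \ref{LEM:infty} by a dyadic chaining argument in time: it telescopes $\bz^\o(t)$ over nested grids of $2^k+1$ equally spaced points, combines this with a Littlewood--Paley square-function estimate in space, controls the maximum over the grid points by taking $q_k \sim \log 2^k + p + r$ moments, and exploits the oscillation bound $|e^{i|\xi|t_{\l_k,k}} - e^{i|\xi|t_{\l'_{k-1},k-1}}| \les \min(1, 2^{-k}N)$ for $|\xi|\sim N$ to sum over the scales $k$. You instead pay an $\eps/2$ fractional derivative in time via the one-dimensional Sobolev embedding $W^{\eps/2,q}_t([j,j+1]) \hookrightarrow L^\infty_t([j,j+1])$ with $q>2/\eps$, and convert it into an $\eps/2$ spatial derivative through the symbol identity for the half-wave propagator, after which everything reduces to the $L^q_t$ computation already carried out in Proposition \ref{PROP:Str}. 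Both are standard devices (yours is the Burq--Tzvetkov-style argument, the paper's is the chaining argument of \cite{OP}); both deliver the stated variances, including the $\max(1,j)$ factor at the level of the moment bound for \eqref{infty0b}, which squares to $\max(1,j^2)$ in the tail. One point in your write-up needs care: $\jb{\dt}^{\eps/2}$ is nonlocal, so $\|\jb{\dt}^{\eps/2}u\|_{L^q_t([j,j+1])}$ is not an intrinsic norm of $u|_{[j,j+1]}$, and the embedding exactly as you wrote it is not literally valid. You should either insert a smooth time cutoff $\chi$ adapted to $[j,j+1]$ (the time Fourier transform of $\chi(t)e^{\pm it|\xi|}$ is $\wt\chi(\tau\mp|\xi|)$, so the weighted $L^{q'}_\tau$ norm still produces $\jb{\xi}^{\eps/2}$ up to rapidly decaying tails), or work with the intrinsic Gagliardo seminorm $\big(\iint_{[j,j+1]^2}|u(t)-u(t')|^q\,|t-t'|^{-1-\eps q/2}\,dt\,dt'\big)^{1/q}$, estimating the increment by $|e^{it|\xi|}-e^{it'|\xi|}|\les |t-t'|^{3\eps/4}\jb{\xi}^{3\eps/4}$ so that the double time integral converges. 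Either fix is routine, and the total spatial loss stays below $\eps$ as you indicated.
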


Assuming Lemma \ref{LEM:infty}, 
we first present the proof of Proposition \ref{PROP:infty}.

\begin{proof}[Proof of Proposition \ref{PROP:infty}]

We first consider the case $S^*(t) = S(t)$ and $T\geq 1$.
By subadditivity, \eqref{infty0a}, and \eqref{infty0b}, we have
\begin{align*}
P\Big( \|S(t)  (\bu_{0, T}^\o, \bu_{1, T}^\o)& \|_{L^\infty_t([0, T];  L^r_x(\R^d))} > \ld\Big) \\
&  \leq 
P\Big(\max_{j = 0, \dots, [T]}\|S(t) (\bu_{0, T}^\o, \bu_{1, T}^\o)\|_{L_t^\infty([j, j+1]; L^r_x(\R^d))} > \ld\Big)\\
& \leq 
\sum_{j = 0}^{[T]}
P\Big(\|S(t) (\bu_{0, T}^\o, \bu_{1, T}^\o)\|_{L_t^\infty([j, j+1]; L^r_x(\R^d))} > \ld\Big)\\
& \leq 
\sum_{j = 0}^{[T]}
P\bigg(\|\cos (t|\nb|) \bu_{0, T}^\o\|_{L_t^\infty([j, j+1]; L^r_x(\R^d))} > \frac{\ld}{2}\bigg)\\
& \hphantom{XXX}
+
\sum_{j = 0}^{[T]}
P\Bigg(\bigg\|\frac{\sin (t|\nb|)}{|\nb|} \bu_{1, T}^\o\bigg\|_{L^\infty([j, j+1]; L^r_x(\R^d))} > \frac{\ld}{2}\Bigg)\\
& \leq C\jb{T} \exp \Bigg( -c \frac{\ld^2}{\jb{T}^{d+2} \|(u_0, u_1) \|^2_{\mathcal{H}^\eps(\T^d)}}\Bigg).
\end{align*}

\noi
When $S^*(t) = \wt S(t)$, \eqref{infty00} 
follows from \eqref{infty0a} and \eqref{infty0c}.
In this case, we obtain 
$\jb{T}^d$ instead of $\jb{T}^{d+2}$ on the right-hand side of 
\eqref{infty00}.
\end{proof}

Finally, we present the proof of Lemma \ref{LEM:infty}.

\begin{proof}[Proof of Lemma \ref{LEM:infty}]
 We first prove \eqref{infty0a}.
 In the following, we only consider the case of $S_+(t)$.
Set
$\bz^\o(t) = \bz_T^\o(t): = S_+(t) \pphi_T^\o.$

\smallskip

\noi
{\bf Part 1\,(a):}
We first consider the case $r < \infty$.
The first half of the reduction (up to \eqref{infty4}) is exactly the same as that
in the proof of Lemma 3.4 in \cite{OP}.
We decided to include it for reader's convenience. 
Without loss of generality, assume $j = 0$. 
For $k \in \mathbb{N} \cup \{0\}$, 
let $\{ t_{\l, k}:\l = 0, 1,\dots, 2^k\} $ be $2^{k}+1$ equally spaced points on $[0, 1]$, 
i.e.~$t_{0,k}=0$ and $t_{\l, k} - t_{\l-1, k} = 2^{-k}$ for $\l =1, \dots, 2^k$.
Then, given $t \in [0, 1]$, 
we have
\begin{align}
  \bz^\o(t)
= \sum_{k = 1}^\infty \big(\bz^\o(t_{\l_k, k}) - \bz^\o(t_{\l_{k-1}, k-1})\big)
+ \bz^\o(0)
\label{infty2}
\end{align}

\noi
for some $\l_k = \l_k(t) \in \{0, \dots, 2^k\}$.

By the square function estimate and Minkowski's integral inequality
with \eqref{infty2}, we have
\begin{align*}
 \| \bz^\o& \|_{L^\infty_t([0, 1]; L^r_x(\R^d))}  \notag\\
& \les
\bigg(\sum_{\substack{N \geq 1\\\text{dyadic}}}
\Big( \sum_{k =1}^\infty
 \max_{0\leq \l_k \leq 2^k} 
\big\|\P_N 
 \big(\bz^\o(t_{\l_k, k}) - \bz^\o(t_{\l'_{k-1}, k-1})\big)
\big\|_{L^r_x(\R^d)} \Big)^2\bigg)^\frac{1}{2}
+ \|\bz^\o(0)\|_{L^r_x(\R^d)},
 \end{align*}

\noi
where $t_{\l'_{k-1}, k-1}$ is one of the $2^{(k-1)}$+1 equally spaced points such that 
\begin{align}
|t_{\l_k, k} - t_{\l'_{k-1}, k-1}| \leq 2^{-k}.
\label{infty3a}
\end{align}

\noi
Hence, for  $p \geq 2$, we have
\begin{align}
\Big(\E\big[  \| \bz^\o& \|_{L^\infty_t([0, 1]; L^r_x(\R^d))}\big]^p\Big)^\frac{1}{p} \notag \\
& \les
\Bigg(\sum_{\substack{N \geq 1\\\text{dyadic}}}
\bigg( \sum_{k = 1}^\infty
\Big(\E \Big[
 \max_{0\leq \l_k \leq 2^k} 
 \big\|\P_N 
 \big(\bz^\o(t_{\l_k, k}) - \bz_\pm^\o(t_{\l'_{k-1}, k-1})\big)
 \big\|_{L^r_x(\R^d)}
 \Big]^p
  \Big)^\frac{1}{p}\bigg)^2\Bigg)^\frac{1}{2}\notag \\
& + 
\Big(\E\big[ \|\bz^\o(0)\|_{L^r_x(\R^d)}\big]^p\Big)^\frac{1}{p}.
\label{infty4}
\end{align}

Proceeding as in \eqref{Str4} with Lemma \ref{LEM:HC} and \eqref{Str2}, 
 the second term on the right-hand side of \eqref{infty4} can be bounded by
\begin{align}
\Big(\E\big[ \|\bz^\o(0)\|_{L^r_x(\R^d)}\big]^p\Big)^\frac{1}{p}
& \leq \Big\|\|\pphi_T^\o\|_{L^p(\O)}\Big\|_{L^r_x}
\les \sqrt p 
\Big\|\|\et (x)  \ft \phi(n) e^{inx}\|_{\l^2_n}\Big\|_{L^r_x}\notag\\
& \leq \sqrt p 
\Big\|\|\et (x)   e^{inx}\|_{L^r_x} \cdot \ft \phi(n)\Big\|_{\l^2_n} \notag\\
& \les \sqrt p \jb{T}^\frac{d}{2} \|\eta\|_{H^\frac{d}{2}(\R^d)}
\|\phi \|_{L^2_x(\T^d)}
\label{infty5}
\end{align}

\noi
for $p \geq r\geq 2$. 
	
In the following, we first estimate	
\begin{align*}
I_N: =  \sum_{k = 1}^\infty
\bigg(\E \Big[
 \max_{0\leq \l_k \leq 2^k} 
 \big\|\P_N 
 \big(\bz^\o(t_{\l_k, k}) - \bz^\o(t_{\l'_{k-1}, k-1})\big)
 \big\|_{L^r_x(\R^d)}
 \Big]^p
  \bigg)^\frac{1}{p}
\end{align*}
	
\noi
for each  dyadic $N \geq 1$.
Let 
\begin{align} q_k  := \max (\log 2^k, p, r)
\sim \log 2^k + p + r.
\label{infty5a}
\end{align}

\noi
Then, we have
\begin{align}
I_N 
& \leq  \sum_{k = 1}^\infty
\bigg(
 \sum_{\l_k = 0}^{2^k} 
 \E 
 \big\|\P_N 
 \big(\bz^\o(t_{\l_k, k}) - \bz^\o(t_{\l'_{k-1}, k-1})\big)
 \big\|_{L^r_x}^{q_k}
  \bigg)^\frac{1}{q_k}\notag\\
\intertext{Noting  that $(2^k+1)^\frac{1}{q_k} \les 1$
and applying  Lemma \ref{LEM:HC}, 
}
& \les  \sum_{k = 1}^\infty
 \max_{0\leq \l_k \leq 2^k} 
\Big(\E
 \big\|\P_N 
 \big(\bz^\o(t_{\l_k, k}) - \bz^\o(t_{\l'_{k-1}, k-1})\big)
 \big\|_{L^r_x}^{q_k}
  \Big)^\frac{1}{q_k} \notag\\
& \les  \sum_{k = 1}^\infty
\sqrt{q_k}
 \max_{0\leq \l_k \leq 2^k} 
\Big\| 
 \big\| 
\P_N \big( S_+(t_{\l_k, k}) - S_+(t_{\l'_{k-1}, k-1})\big)
( \et e^{inx})
 \big\|_{L^r_x}\cdot \ft \phi(n)
 \Big\|_{\l^2_n}. 
 \label{infty5b}
 \end{align}

For $|\xi|\sim N$, it follows from \eqref{infty3a} that 
\begin{align}
 \Big|e^{i|\xi| t_{\l_k, k} }
-  e^{i|\xi|  t_{\l'_{k-1}, k-1}}\Big| \les \min(1, 2^{-k }N).
\label{infty5c}
\end{align}

\noi
We now proceed as in  \eqref{Str2}.
 With \eqref{infty5c} and the triangle inequality, we have
\begin{align}
 \big\| 
\P_N \big(  S_+(t_{\l_k, k})   - S_+(  & t_{\l'_{k-1}, k-1})\big)
 ( \et e^{inx}) \big\|_{L^r_x(\R^d)}
 \les
\min(1, 2^{-k }N)
\|   \fet(\xi-n)\|_{L^{r'}_{|\xi|\sim N}(\R^d)} \notag \\
&  \les 
\jb{n}^\eps N^{-\eps} \min(1, 2^{-k }N)
\| \jb{\xi - n}^{\be+\eps}   \fet(\xi-n)\|_{L^{2}_\xi(\R^d)}\notag \\
& \les\jb{n}^\eps N^{-\eps} \min(1, 2^{-k }N)
 \| \et  \|_{H^{\frac{d}{2}}(\R^d)} \notag\\
& \les \jb{T}^\frac{d}{2} 
\jb{n}^\eps N^{-\eps} \min(1, 2^{-k }N)
\| \eta  \|_{H^{\frac{d}{2}}(\R^d)} 
\label{infty5d}
\end{align}

\noi
as long as 
$\eps > 0$ is sufficiently small such that 
$\beta +\eps = \frac{(r-2)}{2r} d+2 \eps \leq \frac d2 $.
Hence, from \eqref{infty5b} and \eqref{infty5d}, we obtain
 \begin{align}
I_N & \les  \jb{T}^\frac{d}{2} \|\eta\|_{H^\frac{d}{2}(\R^d)}
\sum_{k = 1}^\infty
\sqrt{q_k}
N^{-\eps}\min(1, 2^{-k }N) \|\phi\|_{H^\eps(\T^d)}.
\label{infty6}
\end{align}

Separating the summation (in $k$) into $2^{-k}N \geq 1$
and $2^{-k}N <  1$
and applying \eqref{infty5a}, 
we have
\begin{align}\sum_{k = 1}^\infty
\sqrt{q_k}
N^{-\eps}\min(1, 2^{-k }N) \leq C_{r, \eps} \sqrt p N^{-\frac{\eps}{2}}.
\label{infty6a}
\end{align}

\noi
See \cite{OP} for details.
Finally, putting \eqref{infty4},  \eqref{infty5},  \eqref{infty6}, and \eqref{infty6a},  
together, we obtain 
\begin{align*}
\Big(\E\big[  \| \bz^\o \|_{L^\infty_t([0, 1]; L^r_x(\R^d) )}\big]^p\Big)^\frac{1}{p}
 \leq C_{r, \eps}
\sqrt{p}
  \jb{T}^\frac{d}{2} \|\eta\|_{H^\frac{d}{2}(\R^d)}
 \|\phi\|_{H^\eps(\T^d)}
\end{align*}

\noi
for all $p \geq r$ and  sufficiently small $\eps > 0$.
The rest follows from a standard argument
using Chebyshev's inequality.

\smallskip

\noi
{\bf Part 1\,(b):}
Next, we consider the case $r = \infty$.
It follows from Sobolev embedding that, 
given any small $\eps>0$,  there exists  large $\tilde r \gg1$ with $\eps \wt{r} > d$
such that 
\begin{align*}
P\Big( \|S_\pm(t)  \pphi_T^\o\|_{L^\infty_t([j, j+1];  L^\infty_x(\R^d))} > \ld\Big)
 \leq P\Big( \|\jb{\nb}^\eps S_\pm(t) \pphi_T^\o\|_{L^\infty_t([j, j+1];  L^{\wt{r}}_x(\R^d))} > C \ld\Big).
\end{align*}

\noi
Then, the rest follows from the triangle inequality $\jb{\xi}^\eps \les \jb{n}^\eps \jb{\xi-n}^\eps$
and the argument in Part 1\,(a).

\smallskip

\noi
{\bf Part 2:} 
Next, we consider \eqref{infty0c}.
By proceeding as in Part 1\,(a), 
the only essential modifications appear only in \eqref{infty5}
and \eqref{infty5d}.
With \eqref{infty5c} and the triangle inequality: $\jb{\xi}^{-1} \leq \jb{n}^{-1} \jb{\xi-n}$, we have
\begin{align*}
 \big\| 
\P_N \jb{\nb}^{-1} \big(  S_+(t_{\l_k, k})   - S_+(  & t_{\l'_{k-1}, k-1})\big)
 ( \et e^{inx}) \big\|_{L^r_x(\R^d)} \notag\\
&  \les
 \jb{n}^{-1}
\min(1, 2^{-k }N)
\|\jb{\xi-n}   \fet(\xi-n)\|_{L^{r'}_{|\xi|\sim N}(\R^d)} \notag \\
&  \les 
\jb{n}^{\eps-1} N^{-\eps} \min(1, 2^{-k }N)
\| \jb{\xi - n}^{\be+\eps+1}   \fet(\xi-n)\|_{L^{2}_\xi(\R^d)}\notag \\
& \les \jb{T}^\frac{d}{2} 
\jb{n}^{\eps-1} N^{-\eps} \min(1, 2^{-k }N)
\| \eta  \|_{H^{\frac{d}{2}+1}(\R^d)} .
\end{align*}

\noi
This shows how one modifies \eqref{infty5d}, while \eqref{infty5} can be modified similarly.
Then, the rest follows as in Part 1\,(a), yielding \eqref{infty0c}.

\smallskip

\noi
{\bf Part 3:} 
Finally, we prove \eqref{infty0b} when $r < \infty$.
The modification needed for the case $ r = \infty$
is straightforward  as in Part 1\,(b).
Define $\bZ^\o(t)$ by 
\[\bZ^\o(t): = \frac{\sin(t |\nb|)}{|\nb|} \pphi_T^\o .\]

\noi
Repeating the argument in Part 1\,(a)
(but on $[j, j + 1]$ instead of $[0, 1]$), we have 
\begin{align*}
\Big(\E\big[  \| \bZ^\o& \|_{L^\infty_t([j, j+1]; L^r_x(\R^d))}\big]^p\Big)^\frac{1}{p} \notag \\
& \les
\Bigg(\sum_{\substack{N \geq 1\\\text{dyadic}}}
\bigg( \sum_{k = 1}^\infty
\Big(\E \Big[
 \max_{0\leq \l_k \leq 2^k} 
 \big\|\P_N 
 \big(\bZ^\o(t_{\l_k, k}) -\bZ^\o(t_{\l'_{k-1}, k-1})\big)
 \big\|_{L^r_x(\R^d)}
 \Big]^p
  \Big)^\frac{1}{p}\bigg)^2\Bigg)^\frac{1}{2}\notag \\
& + 
\Big(\E\big[ \|\bZ^\o(j)\|_{L^r_x(\R^d)}\big]^p\Big)^\frac{1}{p}
= : \I + \II.
\end{align*}

\noi
When $j = 0$, then we have $\II = 0$.
When $j \geq 1$, proceeding as in \eqref{Str7}, we have 
\begin{align}
\II \les \sqrt p \cdot j 
  \jb{T}^\frac{d}{2} \|\eta\|_{H^{\frac{d}{2}+1}(\R^d)}
\|\phi\|_{H^{-1}(\T^d)}
\label{infty10a}
\end{align}

\noi
for $p \geq r$. 
As for $\I$, we simply repeat the computations in Part 1\,(a)
with a modification in \eqref{infty5d}.
For non-zero $|\xi|\sim N$, it follows from  Mean Value Theorem with \eqref{infty3a} 
and the triangle inequality that 
\begin{align}
 \Bigg|\frac{\sin (t_{\l_k, k} |\xi| )
- \sin(  t_{\l'_{k-1}, k-1}|\xi|)}{|\xi|}\Bigg| \les \min(1 , 2^{-k }N)\frac{\jb{\xi - n}}{\jb{n}}.
\label{infty10c}
\end{align}

\noi
Proceeding as in Part 1 with \eqref{infty10c}, 
we obtain 
\begin{align}
\I 
 \leq C_{r, \eps}
\sqrt{p}
  \jb{T}^\frac{d}{2} \|\eta\|_{H^{\frac{d}{2}+1}(\R^d)}
 \|\phi\|_{H^{\eps-1}(\T^d)}.
\label{infty10b}
\end{align}

\noi
Then, the desired estimate \eqref{infty0b} follows
from \eqref{infty10a} and \eqref{infty10b}.
\end{proof}

\section{Proof of Proposition \ref{PROP:GWP2}}
\label{SEC:4d}

In this section,  we present the proof of  Proposition \ref{PROP:GWP2}. 
In Subsection \ref{SUBSEC:HIGH}, 
we treat the higher dimensional case $ d = 4, 5$.
Then, we briefly discuss 
some components of the proof for the $d = 3$ case
in Subsection \ref{SUBSEC:3d}.

\subsection{Higher dimensional case}\label{SUBSEC:HIGH}
In this subsection, we consider the case $d = 4, 5$.
In this case, the following probabilistic a priori energy bound 
plays 
an essential role, 
replacing Proposition 5.2 in \cite{Poc}.


\begin{lemma}[Probabilistic energy bound]\label{prop:energy}
Let $d=4$ or $5$
and $s <1$ satisfy the condition in Theorem \ref{THM:GWP}.
Given   $(u_0, u_1) \in \mathcal{H}^s(\T^d)$ and $T\geq 1$, 
let $(\bu_{0, T}^\o, \bu_{1, T}^\o)$ be the randomization on $\R^d$ defined in \eqref{R3}, 
satisfying \eqref{cond}. 
Suppose that  $\bv^\omega$ is a solution to the 
Cauchy problem \eqref{NLW4} on $[0,T]$. 
Then, given small $\eps > 0$, 
there exists a set $\wt{\Omega}_{T,\eps}\subset\Omega$
with $P(\wt{\Omega}_{T,\eps}^c)<\frac{\eps}{2}$, such that 
for all $\omega\in\wt{\Omega}_{T,\eps}$,
we have 
\begin{align}
\sup_{t \in [0, T]}E(\bv^\omega(t))\leq C\Big(T,\eps,\|(u_0,u_1)\|_{\H^s(\T^d)}\Big),
\label{P1}
\end{align}

\noi
and thus also
\begin{equation*}
\big\|(\bv^\omega, \pa_t \bv^\omega)\big\|_{L^\infty_t([0,T]; \H^1(\R^d))}
\leq C_0\Big(T,\eps,\|(u_0,u_1)\|_{\H^s(\T^d)}\Big).
\end{equation*}

\end{lemma}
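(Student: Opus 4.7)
The approach is to derive an energy identity for $\bv^\omega$ and close it by a Gronwall-type inequality on a good event extracted from the probabilistic Strichartz estimates of Section \ref{SEC:Str}. Writing $\bz^\omega := S(t)(\bu_{0, T}^{\omega}, \bu_{1, T}^{\omega})$, the equation \eqref{NLW4} and a direct differentiation give
\begin{equation*}
\tfrac{d}{dt} E(\bv^\omega(t)) = \int_{\R^d} \pa_t \bv^\omega \cdot \bigl[F(\bv^\omega) - F(\bv^\omega + \bz^\omega)\bigr] dx,
\end{equation*}
since the linear wave contribution cancels. When $d=4$ the integrand expands algebraically via $F(\bv+\bz) - F(\bv) = 3\bv^2 \bz + 3 \bv \bz^2 + \bz^3$; when $d=5$, although $F(u)=|u|^{4/3}u$ is only $C^{1,1/3}$, the pointwise bound $|F(\bv+\bz) - F(\bv)| \les |\bv|^{4/3} |\bz| + |\bz|^{7/3}$ suffices.

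H\"older's inequality, together with the elementary estimates $\|\pa_t \bv^\omega\|_{L^2_x} \leq \sqrt{2 E(\bv^\omega)}$ and $\|\bv^\omega\|_{L^{2d/(d-2)}_x}^{2d/(d-2)} \leq \tfrac{2d}{d-2} E(\bv^\omega)$, then converts the right-hand side into a sum of terms of the form $E(\bv^\omega)^{\alpha_i} \|\bz^\omega(t)\|_{L^{r_i}_x}^{k_i}$ with $\alpha_i \in [1/2, 1]$. In $d = 4$ the natural H\"older grouping yields
\begin{equation*}
\Big|\tfrac{d}{dt} E(\bv^\omega)\Big| \les E(\bv^\omega) \|\bz^\omega\|_{L^\infty_x} + E(\bv^\omega)^{3/4} \|\bz^\omega\|_{L^8_x}^2 + E(\bv^\omega)^{1/2} \|\bz^\omega\|_{L^6_x}^3,
\end{equation*}
and in $d = 5$ one analogously obtains $|\tfrac{d}{dt} E(\bv^\omega)| \les E(\bv^\omega)^{9/10} \|\bz^\omega\|_{L^{10}_x} + E(\bv^\omega)^{1/2} \|\bz^\omega\|_{L^{14/3}_x}^{7/3}$.

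I would then take $\wt \Omega_{T,\eps}$ to be the intersection of a finite collection of events of the form $\{\|\bz^\omega\|_{L^{q_i}_t([0,T]; L^{r_i}_x(\R^d))} \leq R\}$ with $(q_i, r_i)$ dual to the H\"older grouping (for $d = 4$: $(1, \infty)$, $(2, 8)$, $(3, 6)$; for $d = 5$: $(1, 10)$, $(7/3, 14/3)$). Propositions \ref{PROP:Str} and \ref{PROP:infty} give super-Gaussian tail bounds for each of these norms in terms of $\jb{T}^{d/2}\|(u_0, u_1)\|_{\H^s(\T^d)}$, so choosing $R = R(T, \eps, \|(u_0, u_1)\|_{\H^s})$ sufficiently large and applying subadditivity yields $P(\wt \Omega_{T,\eps}^c) < \eps/2$. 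The $\|\bz^\omega\|_{L^1_t L^\infty_x}$ norm arising in $d = 4$ is precisely what forces $s > 0$ via Proposition \ref{PROP:infty}, while in $d=5$ only finite Lebesgue exponents appear, consistent with $s \geq 0$. On $\wt \Omega_{T,\eps}$, setting $M(t) := \sup_{\tau \in [0, t]} E(\bv^\omega(\tau))$ and integrating the differential inequality, one treats the linear-in-$E$ term by Gronwall's inequality and the sublinear-in-$E$ terms by Young's inequality to arrive at \eqref{P1}; the $\H^1(\R^d)$ bound on $(\bv^\omega, \pa_t \bv^\omega)$ then follows, the only subtlety being that the missing $L^2$ piece is recovered from the potential energy using the fact that, by finite speed of propagation, $\bv^\omega(t, \cdot)$ is supported in a ball of radius $\les \jb{T} + T$.

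The main obstacle is matching the Lebesgue exponents $r_i$ forced by the H\"older grouping with the regularity threshold allowed by the probabilistic Strichartz estimates at the given $s$, in particular verifying that $\|\bz^\omega\|_{L^1_t L^\infty_x}$ in $d = 4$ is controllable via Proposition \ref{PROP:infty} with some $0 < \eps' < s$; once the right Strichartz pairs are pinned down, the measure estimate for $\wt \Omega_{T,\eps}$ and the Gronwall step are routine.
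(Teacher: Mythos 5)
Your argument is essentially the paper's proof: differentiate the energy, control the right-hand side by H\"older, close via Gronwall, and carve out $\wt\Omega_{T,\eps}$ from the probabilistic Strichartz tail bounds (the paper sketches only $d=4$ and absorbs the cross term $\bv\bz^2$ into the two endpoint terms by AM--GM rather than keeping a separate $L^2_tL^8_x$ norm, which is cosmetic). One small correction: the $L^1_tL^\infty_x$ control of $\bz^\omega$ in $d=4$ comes from Proposition \ref{PROP:Str} with $q=1$, $r=\infty$ (whence the requirement $s>0$), not from Proposition \ref{PROP:infty}, which concerns $L^\infty_t L^r_x$ norms and is only needed for $d=3$.
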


\begin{proof}
The proof of this lemma follows closely the proof of Proposition 5.2 in \cite{Poc}
and thus we only sketch the proof of \eqref{P1} when $d = 4$.

Taking the time derivative of the energy $E(v^\o(t))$ with \eqref{NLW4}
and 
integrating by parts, we have
\begin{align*}
\frac{d}{dt}E(\bv^\omega(t))
&=\int_{\R^4}\pa_t \bv^\omega\Big(\pa_{t}^2\bv^\omega-\Delta \bv^\omega+(\bv^\omega)^3\Big)dx
=\int_{\R^4}\pa_t \bv^\omega\Big((\bv^\omega)^3-(\bv^\o+ \bz^\o)^3\Big)dx.
\end{align*}

\noi
By H\"older's inequality, we have
\begin{align*}
\bigg|\frac{d}{dt}E(\bv^\omega(t))\bigg|
&\leq C\Big(E(\bv^\omega(t))\Big)^{\frac12}
\Big(\|\bz^\omega\|_{L^6_x(\R^4)}^3+\|\bz^\omega\|_{L^\infty_x(\R^4)}\|\bv^\omega\|_{L^4_x(\R^4)}^2\Big).
\end{align*}

\noi
Noting that $E(v^\o(0)) = 0$, integration in time  then yields
\begin{align*}
\Big(E(\bv^\omega(t))\Big)^{\frac12}
&\leq C \|\bz^\omega\|_{L^3_TL^6_x}^3+C\int_0^t\|\bz^\omega(t')\|_{L^\infty_x}
\Big(E(\bv^\omega(t'))\Big)^{\frac12}dt'.
\end{align*}

\noi
By Gronwall's inequality,
we obtain 
\begin{align}\label{energy_estim}
\sup_{t \in [0, T]}\Big(E(\bv^\omega(t))\Big)^{\frac12}
&\leq C \|\bz^\omega\|_{L^3_TL^6_x}^3 e^{C\|\bz^\omega\|_{L^1_TL_x^\infty}}.
\end{align}

\noi
Then, 
by choosing  
 $\ld = K  \jb{T}^{4} \|(u_0, u_1)\|_{\H^{s}(\T^4)}$
and $K = K(\eps) \gg 1$, 
it follows 
from Proposition \ref{PROP:Str}
that there exists
$\wt{\Omega}_{T,\eps}\subset\Omega$
with $P(\wt{\Omega}_{T,\eps}^c)<\frac{\eps}{2}$ such that 
for all $\omega\in\wt{\Omega}_{T,\eps}$, we have
\begin{align}\label{zbdd}
 \|\bz^\omega\|_{L^3_TL^6_x} +  \|\bz^\omega\|_{L^1_TL^\infty_x}
 \leq K\jb{T}^4\|(u_0,u_1)\|_{\mathcal{H}^s(\T^d)}.
\end{align}
Combining this with \eqref{energy_estim} yields \eqref{P1}.
\end{proof}

The key deterministic ingredient 
in the proof of Proposition \ref{PROP:GWP2}
is the following
``good'' local well-posedness result
of the perturbed NLW \eqref{NLW5}.
In particular, 
the  time of local existence is characterized
only in terms of 
the $\dot {\mathcal{H}}^1$-norm of the initial data $(v_0,v_1)$
and the size of the perturbation $f$.

\begin{lemma}[Proposition 4.3 in \cite{Poc}]
\label{LEM:pLWP}
Let $d = 4$ or $5$
and  $(\bv_0,\bv_1)\in\mathcal{\dot H}^1(\R^d)$. 
Then, there exists a function 
$\tau : [0, \infty) \times \R_+\times \R_+ \to \R_+$,
non-increasing in the first two arguments, such that 
if  $f$ satisfies the condition
\begin{equation}\label{f}
\|f\|_{L^\frac{d+2}{d-2}_tL^{\frac{2(d+2)}{d-2}}_x([t_0,t_0+\tau_\ast] \times\R^d)}\leq K\tau_\ast^\theta
\end{equation}

\noi
for some $K,\theta>0$ and $\tau_\ast\leq \tau=\tau\big(\|(\bv_0,\bv_1)\|_{\dot{\mathcal{H}}^1(\R^d)}, K,\theta\big)
\ll1$, 
then  
there exists  a unique solution $(\bv, \dt \bv) \in C([t_0,t_0+\tau_\ast]; \dot{\mathcal{H}}^1(\R^d))$
to \eqref{NLW5}.
Moreover,
\begin{align}\label{Stricv}
\|\bv\|_{L^q_t([t_0,t_0+t_\ast]; L^r_x(\R^d))}\leq C(\|(\bv_0,\bv_1)\|_{\mathcal{\dot H}^1(\R^d)}),
\end{align}
for all $1$-admissible pairs $(q,r)$.
 \end{lemma}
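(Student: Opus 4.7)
The plan is to prove Lemma \ref{LEM:pLWP} via a Strichartz fixed-point argument on the Duhamel formulation of \eqref{NLW5}. Define the solution map
\begin{equation*}
\Phi[\bv](t) \;:=\; S(t-t_0)(\bv_0,\bv_1) - \int_{t_0}^t \frac{\sin((t-t')|\nabla|)}{|\nabla|} F(\bv + f)(t')\,dt'
\end{equation*}
and look for its fixed point in a closed ball of the Banach space
\begin{equation*}
Y_{\tau_\ast} \;:=\; \Big\{\bv \colon (\bv,\dt \bv) \in C\big([t_0,t_0+\tau_\ast]; \dot{\mathcal{H}}^1(\R^d)\big),\ \bv \in L^q_tL^r_x\big([t_0,t_0+\tau_\ast]\times \R^d\big)\Big\},
\end{equation*}
with $(q,r) = \big(\tfrac{d+2}{d-2}, \tfrac{2(d+2)}{d-2}\big)$ the $1$-wave admissible pair and $\|\bv\|_{Y_{\tau_\ast}} := \|(\bv,\dt \bv)\|_{L^\infty_t \dot{\mathcal{H}}^1} + \|\bv\|_{L^q_tL^r_x}$.

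First I would apply Lemma \ref{LEM:Strichartz} with $(q,r)$ as above and the dual pair $(\tilde q', \tilde r') = (1,2)$ (corresponding to the $0$-admissible pair $(\infty,2)$), which yields
\begin{equation*}
\|\Phi[\bv]\|_{Y_{\tau_\ast}} \;\les\; \|(\bv_0,\bv_1)\|_{\dot{\mathcal{H}}^1(\R^d)} + \|F(\bv+f)\|_{L^1_tL^2_x([t_0,t_0+\tau_\ast]\times\R^d)},
\end{equation*}
while the elementary pointwise bound $|F(a) - F(b)| \les |a-b|\big(|a|^{\frac{4}{d-2}} + |b|^{\frac{4}{d-2}}\big)$ combined with H\"older's inequality in $(t,x)$ gives the nonlinear difference estimate
\begin{equation*}
\|F(\bv+f)-F(\wt\bv+f)\|_{L^1_tL^2_x} \;\les\; \|\bv-\wt\bv\|_{L^q_tL^r_x}\Big(\|\bv\|^{\frac{4}{d-2}}_{L^q_tL^r_x} + \|\wt\bv\|^{\frac{4}{d-2}}_{L^q_tL^r_x} + \|f\|^{\frac{4}{d-2}}_{L^q_tL^r_x}\Big).
\end{equation*}
The assumption \eqref{f} then forces $\|f\|_{L^q_tL^r_x([t_0,t_0+\tau_\ast]\times\R^d)}^{\frac{4}{d-2}} \leq K^{\frac{4}{d-2}}\tau_\ast^{\frac{4\theta}{d-2}}$, a quantity which is small for $\tau_\ast \ll 1$.

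Next, to control the free-evolution contribution, I would use the global Strichartz bound $\|S(\cdot-t_0)(\bv_0,\bv_1)\|_{L^q_tL^r_x(\R\times\R^d)} \les M := \|(\bv_0,\bv_1)\|_{\dot{\mathcal{H}}^1}$ to partition $[t_0, t_0+\tau_\ast]$ into at most $N = N(M)$ subintervals on each of which this norm is below a small universal constant $\eps_0$. On each piece a small-data contraction in $Y$ closes, thanks to $\eps_0$ and the smallness of $\|f\|_{L^q_tL^r_x}^{\frac{4}{d-2}}$, and Lemma \ref{LEM:Strichartz} controls the $\dot{\mathcal{H}}^1$-growth of the solution across consecutive pieces. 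Once a fixed point of $\Phi$ has been obtained on $[t_0, t_0+\tau_\ast]$, the Strichartz bound \eqref{Stricv} for any $1$-admissible pair follows from one more application of Lemma \ref{LEM:Strichartz} to the already controlled $L^1_tL^2_x$-norm of $F(\bv+f)$.

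The main obstacle is that the lengths of the subintervals in the above partition a priori depend on the \emph{profile} of $(\bv_0,\bv_1)$, not merely on its $\dot{\mathcal{H}}^1$-norm $M$, whereas the statement requires $\tau$ to be a function of $M$, $K$, and $\theta$ only. Resolving this requires carefully absorbing the profile-dependent losses into the $\tau_\ast^\theta$-smallness provided by \eqref{f}, so that the single-step contraction constant depends on $(M, K, \theta)$ alone; this delicate bookkeeping is precisely what is executed in the proof of Proposition 4.3 of \cite{Poc}, which the present paper takes as a black box.
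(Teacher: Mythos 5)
There is a genuine gap, and it is exactly at the point you flag as ``the main obstacle'' and then defer back to \cite{Poc}. A direct contraction based on partitioning $[t_0,t_0+\tau_*]$ according to where $\|S(\cdot-t_0)(\bv_0,\bv_1)\|_{L^q_tL^r_x}$ is below a small constant $\eps_0$ cannot produce a local existence time depending only on $\|(\bv_0,\bv_1)\|_{\dot{\mathcal H}^1}$: this is precisely the obstruction of energy-criticality, and the smallness $\|f\|_{L^q_tL^r_x}\le K\tau_*^\theta$ is of no help, since the profile-dependence enters through the free evolution of $(\bv_0,\bv_1)$, which is independent of $f$ and of $\tau_*$. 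Concretely, if you try to iterate over the $N\sim (CM/\eps_0)^q$ subintervals, the Duhamel increment on each piece contributes $O(\eps_0^{\frac{d+2}{d-2}})$ to the $\dot{\mathcal H}^1$-norm of the data for the next piece, and summing these over $N$ pieces destroys the hypothesis $\|S(t-t_j)(\bv(t_j),\dt\bv(t_j))\|_{L^q_tL^r_x(J_{j+1})}\le \eps_0$ unless $\eps_0\gtrsim M^q$, which is not small. No amount of bookkeeping with the $\tau_*^\theta$ factor repairs this; one can also see that a single-step contraction in any admissible Strichartz norm only closes for small $M$.

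The missing idea in Proposition 4.3 of \cite{Poc} is that one does not iterate around the free evolution at all: one compares $\bv$ with the \emph{global} solution $\bw$ of the unperturbed energy-critical NLW with the same data $(\bv_0,\bv_1)$ at $t_0$, and invokes the deterministic global theory (Struwe, Grillakis, Shatah--Struwe, Bahouri--Shatah, Tao, \dots) giving the a priori spacetime bound $\|\bw\|_{L^{\frac{d+2}{d-2}}_t(\R;L^{\frac{2(d+2)}{d-2}}_x)}\le C(E(\bw))\le C(M)$. It is this bound that makes the number of subintervals, and hence the threshold $\eps_1(M)$ below which the perturbation $K\tau_*^\theta$ must fall in the long-time stability argument, depend only on $M$; choosing $\tau=\tau(M,K,\theta)$ so that $K\tau^\theta\le\eps_1(M)$ then closes the argument and yields \eqref{Stricv}. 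Note also that the present paper offers no proof of this lemma to compare against --- it is explicitly taken as a black box from \cite{Poc} --- so your sketch should be judged as a reconstruction of that cited proof, and as such it misses its essential ingredient.
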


Now, we are ready to present the proof of 
Proposition \ref{PROP:GWP2} for $ d= 4, 5$.

\begin{proof}[Proof of Proposition \ref{PROP:GWP2}]

Let $T\geq 1$ and $\eps > 0$.
Given $(\bu_{0, T}^\omega, \bu_{1, T}^\omega)$, 
let $\bz^\o$ and $\bv^\o$ be as in \eqref{z}.
By  Lemma \ref{prop:energy}, 
there exists a set 
 $\O_1 $ with 
\begin{align}
P(\Omega_1^{c})<\frac{\eps}{2}
\label{Q3} 
\end{align}

\noi
such that 
\begin{align}
\sup_{t \in [0, T]}\| (\bv^\o  (t), \dt \bv^\o(t)) \|_{ \mathcal{H}^1(\R^d)} 
\leq C_0 : = C_0(T, \eps,  \|(u_0, u_1)\|_{{\mathcal{H}^s(\T^d)}}) < \infty,
\label{Q4}
\end{align}

\noi
 for each $\o \in \O_1$.

Let  $\tau = \tau\big(C_0, K, \theta)$  be as  in Lemma \ref{LEM:pLWP}, 
where   $K =  \| (u_0, u_1)\|_{\mathcal{H}^{0}(\T^d)}$
and $\theta = \frac{d-2}{2(d+2)}$.
Fix $\tau_* \leq \tau$ 
to be chosen later. 
By writing 
 $[0, T] = \bigcup_{j = 0}^{[ T/\tau_*]}  I_j$
with   $ I_j = [ j \tau_*, (j+1) \tau_*]\cap [0, T]$, 
define $ \O_2$ by 
\begin{align}
\O_2 := \Big\{ \o \in \O: \, 
\| \bz^\o \|_{L^\frac{d+2}{d-2}_{ I_j} L^{\frac{2(d+2)}{d-2}}_x} \leq K | I_j|^{\theta}, 
j = 0, \dots, \big[\tfrac{T}{\tau_*}\big]\Big\}.
\label{Q4a}
\end{align}

\noi
Then, by Proposition \ref{PROP:Str} with $|I_j | \leq \tau_*$, 
we have 
\begin{align*}
P(\O_2^c) 
& \leq 
\sum_{j = 0}^{[\frac{T}{\tau_*}]}
P \Big( \| \bz^\o \|_{L^\frac{d+2}{d-2}_{I_j} L^{\frac{2(d+2)}{d-2}}_x} > K |I_j |^{\theta}\Big)
 \les \frac{T}{\tau_*} \exp \Bigg( - \frac{c}{\jb{T}^{d+2} \tau_*^{2\theta}}\Bigg).
\intertext{By making $\tau_*$ smaller if necessary,}
& \les \frac{T}{\tau_*} \tau_* \exp \Bigg( - \frac{c}{2\jb{T}^{d+2} \tau_*^{2\theta}}\Bigg)
 =  T \exp \Bigg( - \frac{c}{2\jb{T}^{d+2} \tau_*^{2\theta}}\Bigg).
\end{align*}

\noi
Hence, by choosing $\tau_* = \tau_*(T, \eps)$ 
sufficiently small, 
we conclude that 
\begin{align}
P(\Omega_2^{c})<\frac{\eps}{2}.
\label{Q5} 
\end{align}

Let $\wt \O_{T, \eps}: = \O_1 \cap \O_2$.
Then, from  \eqref{Q3} and \eqref{Q5}, we have 
$P(\wt \Omega_{T, \eps}^{c})<\eps.$
Moreover, 
it follows from 
Lemma \ref{LEM:pLWP}
applied iteratively
with \eqref{Q4} and 
\eqref{Q4a}
 on the intervals $I_j$,
$j=0,\dots, [\frac{T}{\tau_*}]$,
that for each $\o \in \wt \O_{T, \eps}$, 
there exists a unique solution $\bv^\o$ to \eqref{NLW4} 
on $[0, T]$.
Hence, for $\o \in \wt \O_{T, \eps}$, 
there exists a unique solution $\bu^\o = \bz^\o + \bv^\o$ to \eqref{NLW3} on $[0, T]$.
Moreover, \eqref{Stricbv}
follows from \eqref{Stricv}.
\end{proof}

\subsection{Three-dimensional case}
\label{SUBSEC:3d}

In the following, we briefly sketch the idea of the proof of Proposition \ref{PROP:GWP2}
when $ d= 3$.  
In this case, 
the additional difficulty comes from the lack of a probabilistic a priori energy bound (Lemma \ref{prop:energy}).
Therefore, as in \cite{OP}, we need to establish a uniform probabilistic energy bound 
for approximating random solutions.

Let   $(u_0, u_1)\in \mathcal{H}^s(\T^3)$ with $\frac 12 < s< 1$ and $T\geq 1$.
Given $N \geq 1$ dyadic, 
define $\bu_{j,T,  N}^\o$, $j = 0, 1$,  by 
\begin{equation}
   \bu_{j, T, N}^\o := \P_{\leq N} \bu_{j, T}^\o.
\label{cutoffdata}
\end{equation}

\noi
Let $\bu_N$ be the smooth global solution to \eqref{NLW3} on $\R^3$
with initial data
$(\bu_N, \dt \bu_N)|_{t = 0} = (\bu_{0,T,  N}^\o, \bu_{1, T, N}^\o) \in \mathcal{H}^\infty(\R^3)$. 
Denote the linear and nonlinear parts of $\bu_N$
by 
$ \bz_N = \bz_N^\o$ and $\bv_N = \bv_N^\o$, respectively.
In particular, $\bv_N$ is the 
smooth global solution
to the following perturbed NLW on $\R^3$:
\begin{align}
\begin{cases}
 \dt^2 \bv_N - \Dl \bv_N + ( \bv_N+\bz_N)^5 = 0,\\
 (\bv_N, \dt \bv_N) |_{t = 0} = (0, 0).
\end{cases}
\label{pNLW1}
\end{align}

\noi
While 
we have $\| (\bv_N^\o,\partial_t \bv_N^\o) \|_{L_t^\infty(\R;  \dot{ \mathcal{H}}^1(\R^3))} \leq C(N, \o)  < \infty$
for each $N \in \mathbb{N}$, 
there is no uniform (in $N$) control on the $H^1$-norm of $\bv_N$.
The following lemma 
establishes a uniform (in $N$) bound on the $H^1$-norm of $\bv_N$
in a probabilistic manner.

\begin{lemma}\label{PROP:Penergy}
Let $ s \in (\frac 12, 1)$ and $N \geq 1$ dyadic.
Given $T,  \eps > 0$, there exists $\wt{ \O}_{N, T, \eps}\subset \O$
such that 
\begin{itemize}
\item[(i)] $P(\wt \O_{N, T, \eps}^c) < \eps$, 
\item[(ii)]  
There exists a finite constant $C(T, \eps, \|(u_0, u_1)\|_{{\mathcal{H}}^s(\T^3)}) > 0$
such that
the following energy bound holds:
\begin{align}
\sup_{t \in [0, T]}\| (\bv^\o_N  (t),\partial_t \bv^\o_N  (t)) \|_{ \mathcal{H}^1(\R^3)} 
\leq C(T, \eps,  \|(u_0, u_1)\|_{{\mathcal{H}}^s(\T^3)}),
\label{Penergy1}
\end{align}

\noi
for all   solutions $\bv^\o_N$ to \eqref{pNLW1} 
on $[0, T]$ with  $\o \in \wt{\O}_{N, T, \eps}$.
\end{itemize}
	
\noi	
Note that  the constant $C(T, \eps,  \|(u_0, u_1)\|_{{\mathcal{H}}^s(\R^3)})$ 
is independent of dyadic $N \geq 1$.
\end{lemma}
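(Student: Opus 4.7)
The plan is to derive a probabilistic a priori bound on the energy $E(\bv_N^\o(t))$ whose dependence on the data is uniform in the frequency cutoff $N$. Although $\bv_N$ is smooth (so that its energy is finite at each time), the forcing $(\bv_N + \bz_N)^5 - \bv_N^5$ in the equation involves the rough random linear flow $\bz_N^\o = \P_{\leq N} S(t)(\bu_{0,T}^\o, \bu_{1,T}^\o)$, and it is only the probabilistic estimates from Section \ref{SEC:Str} that allow one to close an energy estimate at regularity below the energy space.

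First I would compute the time derivative of $E(\bv_N^\o)$ using \eqref{pNLW1} and expand the quintic nonlinearity:
\[ \frac{d}{dt} E(\bv_N^\o) = -\sum_{k=1}^{5} \binom{5}{k} \int_{\R^3} \partial_t \bv_N^\o \cdot (\bv_N^\o)^{5-k} (\bz_N^\o)^k \, dx. \]
The four terms with $1 \le k \le 4$ each carry at least one factor of $\bv_N$ and can be estimated by H\"older's inequality together with the Sobolev embedding $\dot H^1(\R^3) \hookrightarrow L^6(\R^3)$, yielding bounds of the form $C\, E(\bv_N)^{a_k}\,\|\bz_N\|_{L^{q_k}_T L^{r_k}_x}^{k}$ for exponents compatible with the Strichartz scaling for the $d=3$ energy-critical equation. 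The endpoint term $k = 5$ has no $\bv_N$ factor to absorb into the energy; to handle it, I would introduce the modified energy
\[ \mathcal{E}(\bv_N, \bz_N) := E(\bv_N) + \int_{\R^3} \bv_N \bz_N^5 \, dx, \]
so that integration by parts in time converts the offending contribution into a boundary term bounded by $E(\bv_N)^{1/6}\,\|\bz_N\|_{L^\infty_T L^6_x}^5$ plus a term of the shape $\int \bv_N \bz_N^4 \partial_t \bz_N\, dx$, which is in turn estimated by a H\"older split involving $\|\bv_N\|_{L^6}$, a high-integrability space-time norm of $\bz_N$, and a low-regularity norm of $\partial_t \bz_N$.

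Combining these estimates and applying Gronwall's inequality yields a bound of the form
\[ \sup_{t \in [0, T]} E(\bv_N^\o(t)) \leq C(T) \exp\big(C\, R(\bz_N^\o)\big), \]
where $R(\bz_N^\o)$ is a polynomial in a finite collection of space-time norms of $\bz_N^\o$ and $\partial_t \bz_N^\o$ on $[0,T]\times \R^3$. Since Littlewood-Paley projections are bounded on $L^r_x$ for every $1 < r < \infty$ (and the $r = \infty$ case is reached through the mild Sobolev loss already used in the proofs of Propositions \ref{PROP:Str} and \ref{PROP:infty}), those tail bounds apply uniformly in $N$ and give, for each of the relevant norms,
\[ P\big( \|\bz_N^\o\|_{L^{q_i}_T L^{r_i}_x} > \ld_i \big) \leq C \jb{T}^B \exp\!\big(-c\,\ld_i^{\,2} / \jb{T}^A \|(u_0, u_1)\|_{\H^s(\T^3)}^{\,2}\big), \]
with constants independent of $N$. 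A union bound over the finitely many norms appearing in $R$, with thresholds $\ld_i = \ld_i(T,\eps,\|(u_0,u_1)\|_{\H^s(\T^3)})$ chosen large enough, then produces an exceptional set $\wt\O_{N,T,\eps}^c$ of probability at most $\eps$ on which \eqref{Penergy1} holds with the claimed $N$-independent constant.

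The main obstacle will be the bookkeeping of integrability exponents for the worst interaction terms, in particular for $\bz_N^4 \partial_t \bz_N$ and for $\partial_t \bv_N \cdot \bv_N\, \bz_N^4$: at $s$ only slightly above $\tfrac{1}{2}$, the random linear flow $\partial_t \bz_N$ lies only in Strichartz spaces with a narrow range of admissible exponents, and the condition $s > \tfrac{1}{2}$ is precisely what is required to close each H\"older estimate with a factor of $\|(u_0, u_1)\|_{\H^s(\T^3)}$ on the right-hand side. This delicacy is the reason why the direct energy argument of Lemma \ref{prop:energy} (valid for $d=4,5$) breaks down in three dimensions and the modified-energy/approximation scheme is required.
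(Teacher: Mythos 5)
Your argument is correct and is essentially the proof the paper intends: the paper establishes this lemma by deferring to Proposition 4.1 of \cite{OP}, whose proof is precisely the Gronwall/modified-energy scheme you describe (expansion of the quintic difference, integration by parts in time for the pure $\bz_N^5$ term, and a union bound over finitely many space-time norms controlled by Propositions \ref{PROP:Str} and \ref{PROP:infty}, uniformly in $N$ thanks to the $L^r_x$-boundedness of $\P_{\leq N}$ and the Sobolev loss for $r=\infty$). The one step to state carefully is the term $\int \bv_N \bz_N^4 \partial_t \bz_N\,dx$: since $\partial_t \bz_N$ has regularity $s-1<0$ it cannot be placed in an $L^p_x$ space by plain H\"older, and one instead pairs $\jb{\nabla}^{1-s}(\bv_N\bz_N^4)$ against $\jb{\nabla}^{s-1}\partial_t\bz_N$ (controlled via the $\wt S(t)$ bound of Proposition \ref{PROP:infty}), the fractional Leibniz rule on $\bz_N^4$ being where $s>\tfrac12$ enters --- exactly as you anticipate.
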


Lemma \ref{PROP:Penergy} plays the role of 
Proposition 4.1 in \cite{OP}
and is a suitable substitute of  the probabilistic a priori energy estimate (Lemma \ref{prop:energy})
when $ d= 3$.
One can prove 
Lemma \ref{PROP:Penergy} 
exactly in the same manner as 
Proposition 4.1 in \cite{OP}, 
by simply replacing the probabilistic Strichartz estimates on $\R^d$
(Lemma 3.2 and Proposition 3.3 in \cite{OP})
with 
the appropriate probabilistic Strichartz estimates for our problem
(Propositions \ref{PROP:Str} and  \ref{PROP:infty} above).
Therefore, we omit details.

The following lemma is the key deterministic ingredient in this case.
Given $f \in L^5_{t, \text{loc}}L^{10}_x$, 
  let $f_N = \P_{\leq N} f $ for dyadic $N \geq 1$.
Consider the following perturbed NLW:
\begin{align}
\begin{cases}
\pa_{t}^2\bv_N-\Delta \bv_N+(\bv_N+f_N)^5=0\\
(\bv_N, \dt \bv_N)|_{t = 0} = (0, 0).
\end{cases}
\label{pNLW2}
\end{align}

\begin{lemma}[Proposition 5.2 in \cite{OP}]
\label{PROP:pLWP2}
Let $f, f_N$, and $\bv_N$ be as above.
Given finite $T > 0$, assume that the following conditions hold:
\begin{itemize}
\item[\textup{(i)}]
There exist $K, \theta > 0$ such that  
\begin{equation*}
\|f\|_{L^5_tL^{10}_x(I\times \R^3)}\leq K|I |^\theta
\end{equation*}

\noi
for any compact interval $I \subset [0, T]$.	

\item[\textup{(ii)}]
For each dyadic $N\geq 1$, 
a solution $\bv_N$  to \eqref{pNLW2} exists on $[0, T]$
and 
satisfies the following uniform a priori energy bound:
\begin{equation*}
\sup_N \sup_{t \in [0, T]} \|(\bv_N(t),\dt \bv_N(t))\|_{{\mathcal{H}}^1(\R^3)} < C_0(T) < \infty.
\end{equation*}

\noi
\item[\textup{(iii)}]
There exists $\al > 0$ such that 
\begin{equation*}
 \|f - f_N \|_{L^5_T L^{10}_x} < C_1(T) N^{-\al}
\end{equation*}

\noi 
for all dyadic $N \geq 1$.

\end{itemize}

\noi
Then, there exists a unique solution $(\bv, \dt \bv)\in C([0, T]; {\mathcal{H}}^1(\R^3))$
 to \eqref{NLW5}
with $(\bv, \dt \bv)|_{t = 0} = (0, 0)$, 
satisfying  
\begin{equation*}
 \sup_{t \in [0, T]} \|(\bv(t),\dt \bv(t))\|_{{\mathcal{H}}^1(\R^3)} < 2C_0(T) < \infty.
\end{equation*}

\end{lemma}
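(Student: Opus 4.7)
My plan is to construct $\bv$ on $[0,T]$ as the strong limit of the approximating solutions $\bv_N$ via a stability argument carried out on finitely many small time intervals. The uniform a priori bound (ii) will provide compactness, condition (iii) will provide smallness of the forcing difference $f_N-f_M$, and condition (i) will supply the smallness of $f$ itself needed to run the energy-critical perturbation machinery. Uniqueness will follow from the same stability estimate applied to two candidate solutions.

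First I would partition $[0,T] = \bigcup_{j=0}^{J-1} I_j$ into closed subintervals of length $\tau$, where $\tau=\tau(C_0(T))>0$ is chosen small enough that two smallness conditions hold: (a) by (i), $\|f\|_{L^5_{I_j} L^{10}_x}\leq\eta$ on each $I_j$ for a small $\eta=\eta(C_0(T))>0$ to be fixed, and hence by (iii) also $\|f_N\|_{L^5_{I_j} L^{10}_x}\leq 2\eta$ uniformly in $N$ (sufficiently large); and (b) by Lemma \ref{LEM:Strichartz} together with the absolute continuity of the $L^5_t$-norm of any $L^5_t L^{10}_x$ function, the linear flow satisfies $\|S(t-t_j)(\phi_0,\phi_1)\|_{L^5_{I_j} L^{10}_x}\leq\eta$ for every $(\phi_0,\phi_1)\in \H^1(\R^3)$ with norm at most $2C_0(T)$.

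Then I would proceed inductively in $j$, with the trivial base case at $j=0$. On each $I_j$ I would first establish a uniform Strichartz bound $\sup_N \|\bv_N\|_{L^5_{I_j} L^{10}_x}\leq C_1(C_0(T))$ by bootstrapping the Duhamel formulation for $\bv_N$ via Lemma \ref{LEM:Strichartz} together with the estimate $\|(\bv_N+f_N)^5\|_{L^1_{I_j} L^2_x}\lesssim (\|\bv_N\|_{L^5_{I_j} L^{10}_x}+\|f_N\|_{L^5_{I_j} L^{10}_x})^5$ and the $\eta$-smallness from the previous step. Next I would consider the difference $w_{N,M}:=\bv_N-\bv_M$, which satisfies
\begin{align*}
\partial_t^2 w_{N,M}-\Delta w_{N,M}+(\bv_N+f_N)^5-(\bv_M+f_M)^5=0.
\end{align*}
Applying Strichartz, the pointwise bound $|a^5-b^5|\lesssim(|a|^4+|b|^4)|a-b|$, H\"older's inequality, and the uniform Strichartz control on $\bv_N,\bv_M$, and absorbing the $\|w_{N,M}\|_{L^5 L^{10}}$ term on the left by the $\eta$-smallness, I expect an estimate of the form
\begin{align*}
\|(w_{N,M},\dt w_{N,M})\|_{L^\infty_{I_j}\H^1}+\|w_{N,M}\|_{L^5_{I_j} L^{10}_x}
\lesssim \|(w_{N,M}(t_j),\dt w_{N,M}(t_j))\|_{\H^1}+\|f_N-f_M\|_{L^5_{I_j} L^{10}_x}.
\end{align*}
By (iii) and the inductive hypothesis the right-hand side tends to $0$ as $N,M\to\infty$, so $\{(\bv_N,\dt\bv_N)\}$ is Cauchy in $C(I_j;\H^1)$ and in $L^5_{I_j} L^{10}_x$; passing to the limit in the Duhamel formula yields $\bv$ solving \eqref{NLW5} on $I_j$, with the $\H^1$-bound $\leq 2C_0(T)$ inherited from (ii) via strong convergence, closing the induction at $t_{j+1}$.

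Iterating across the finitely many subintervals will then cover all of $[0,T]$, and the same difference estimate applied to two solutions in the stated class will give uniqueness. The main obstacle will be the first bootstrap inside the inductive step: at the energy-critical scaling the norm $\|\bv_N\|_{L^5 L^{10}}$ is \emph{not} controlled by the $\H^1$-norm alone, so I must calibrate $\eta$ and $\tau$ (both depending only on $C_0(T)$) so that the smallness of the linear flow plus that of $f_N$ together close a bootstrap for $\bv_N$ purely in terms of the a priori energy bound (ii), independently of $N$; once this is secured, the difference estimate and the induction are routine.
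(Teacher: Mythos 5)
The paper does not actually prove this lemma; it is quoted verbatim as Proposition 5.2 in \cite{OP} and used as a black box, so the only meaningful comparison is with the argument in \cite{OP}. Your overall skeleton (partition $[0,T]$ into finitely many intervals, prove a uniform-in-$N$ spacetime bound, then run a difference/stability estimate to get a Cauchy sequence, and reuse the difference estimate for uniqueness) matches that argument. However, there is a genuine gap at exactly the point you flag as "the main obstacle", and it cannot be repaired by calibrating $\eta$ and $\tau$: your step (b) claims that for $\tau$ small depending only on $C_0(T)$ one has $\|S(t-t_j)(\phi_0,\phi_1)\|_{L^5_{I_j}L^{10}_x}\leq \eta$ for \emph{every} $(\phi_0,\phi_1)$ with $\mathcal{H}^1$-norm at most $2C_0(T)$. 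This is false at critical regularity. Absolute continuity of the time integral gives smallness on short intervals only for a \emph{fixed} function; it is not uniform over bounded sets of data. Indeed, taking $\phi_0^\lambda(x)=\lambda^{1/2}\phi_0(\lambda x)$, $\phi_1^\lambda(x)=\lambda^{3/2}\phi_1(\lambda x)$ keeps the $\dot{\mathcal{H}}^1$-norm fixed while $\|S(t)(\phi_0^\lambda,\phi_1^\lambda)\|_{L^5_{[0,\tau]}L^{10}_x}=\|S(t)(\phi_0,\phi_1)\|_{L^5_{[0,\lambda\tau]}L^{10}_x}$ approaches the full spacetime norm as $\lambda\to\infty$, no matter how small $\tau$ is. Without this smallness your bootstrap for $\sup_N\|\bv_N\|_{L^5_{I_j}L^{10}_x}$ does not close: Strichartz leaves the non-small term $\|(\bv_N(t_j),\dt\bv_N(t_j))\|_{\dot{\mathcal{H}}^1}\leq C_0(T)$ on the right-hand side, and $(CC_0+\eta)^5\leq CC_0$ fails for large $C_0$.

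The missing ingredient is the deterministic \emph{global} theory for the unperturbed energy-critical quintic NLW on $\R^3$: any finite-energy solution $w$ satisfies a global spacetime bound $\|w\|_{L^5_tL^{10}_x(\R\times\R^3)}\leq C(E(w))$ (Grillakis, Shatah--Struwe, Bahouri--G\'erard, Tao). The correct route, as in \cite{OP}, is the long-time perturbation argument: on each step one compares $\bv_N$ with the global solution $w_N$ of the unperturbed equation with data $(\bv_N(t_j),\dt\bv_N(t_j))$, subdivides using the smallness of $\|w_N\|_{L^5L^{10}}$ on subintervals whose \emph{number} (not length) is controlled by $C(C_0(T))$, and closes a perturbative estimate on each piece using (i) and (iii) for the forcing. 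This yields the "good local well-posedness" with existence time and spacetime bounds depending only on the energy (the $d=3$ analogue of Lemma 5.2 in the paper), after which your difference estimate and induction over the finitely many intervals go through essentially as you wrote them.
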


Finally, 
with Proposition \ref{PROP:Str}, Lemmas \ref{PROP:Penergy} and \ref{PROP:pLWP2}, 
one can prove Proposition \ref{PROP:GWP2}, 
following  
the proof of Proposition 6.1 in \cite{OP}.
Since the argument is identical, 
we omit details.

\appendix

\section{On the finite speed of propagation}
\label{SEC:APP}

In this appendix, we discuss
the issues related to the finite speed of propagation.
In particular, we 
provide  details  of the reduction from 
Proposition \ref{PROP:GWP2} on $\R^d$  to Proposition \ref{PROP:GWP} on $\T^d$.
For simplicity of the presentation, we only consider  the case $d = 4$.

In the following, fix  $(u_0, u_1) \in \mathcal{H}^s(\T^4)$ with $0<s<1$, $T\geq 1$, 
and $\eps > 0$.
 By Proposition \ref{PROP:GWP2}, 
 there exists
$\wt \O_{T, \frac 12 \eps}$ 
with
$P(\wt \O_{T, \frac 12 \eps}^c) < \frac 12 \eps$
and, for each $\o \in \wt \O_{T, \frac 12 \eps}$, there exists a unique solution 
$\bv^\o$ to \eqref{NLW4} on $[0, T]$, 
satisfying the energy bound \eqref{Q4}.

Given $N \in \mathbb{N}$, 
define periodic functions $u_{j,N}^\o$  on $\T^d$, $j = 0, 1$, by
\[u_{j,N}^\o := \P_{\leq N} u_{j}^\o
 = \sum_{|n|\leq N} g_{n, j} (\o) \ft u_j(n) e^{in \cdot x}  \]

\noi
and set  
$(\bu_{0,N,  T}^\o, \bu_{1,N, T}^\o)
= (\et u_{0,N}^\o, \et u_{1,N}^\o)$.
Note that $\bu_{j,N,  T}^\o$ is different from 
 $\bu_{j,T,  N}^\o$
 defined in  \eqref{cutoffdata}.
It follows from an analogue of Lemma \ref{LEM:Hs}
that  
$(\bu_{0,N, T}^\o, \bu_{1, N, T}^\o) \in \mathcal{H}^\infty(\R^4)$
almost surely. 
Therefore, there exists 
a unique (smooth) global solution $\bu_N^\o$ to 
the following Cauchy problem on $\R^4$:
\begin{equation*}
\begin{cases}
\pa_{t}^2 \bu_N^\o -\Delta \bu_N^\o+(\bu_N^\o)^3 =0 
\\
(\bu_N^\o,   \pa_t \bu_N^\o)\big|_{t = 0} = (\bu_{0, N, T}^\o, \bu_{1,N,  T}^\o).
\end{cases}
\end{equation*}

\noi
By the finite speed of propagation (for smooth solutions), 
 $u_N^\o: =  \bu_N^\o |_{[0, T]\times \T^4}$
is a solution to the periodic NLW 
 \eqref{NLW} on the time interval $[0, T]$
with initial data $(u_{0, N}^\o, u_{1, N}^\o)$.

Denote the linear and nonlinear parts of $\bu_N$
by 
\begin{align*}
 \bz_N = \bz_N^\o
 : = S(t) (\bu_{0, N, T}^\o, \bu_{1,N,  T}^\o)
 \qquad \text{and}
 \qquad \bv_N :=\bu_N^\o - \bz_N^\o. 
\end{align*}

\noi
Then, $\bv_N$ is the 
smooth global solution
to the following perturbed NLW on $\R^4$:
\begin{align*}
\begin{cases}
 \dt^2 \bv_N - \Dl \bv_N + (\bv_N+\bz_N)^3 = 0\\
 (\bv_N, \dt \bv_N) |_{t = 0} = (0, 0).
\end{cases}
\end{align*}

\noi
Also, define  $z_{\text{per}, N}^\o$
and $z_{\text{per}}^\o$
by 
\begin{align*}
z_{\text{per}, N}^\o
 : = S_\text{per}(t) (u_{0, N}^\o, u_{1,N}^\o)
  \qquad \text{and}\qquad
 z_{\text{per}}^\o
 : = S_\text{per}(t) (u_{0}^\o, u_{1}^\o).
\end{align*}

\noi
Note that, 
by the finite speed of propagation for the linear solutions, we have
\begin{align}
 \bz_N^\o |_{[0, T]\times \T^4}
  =   z_{\text{per}, N}^\o
  \qquad \text{and}\qquad
   \bz^\o |_{[0, T]\times \T^4}
  =   z_{\text{per}}^\o, 
\label{A3}
\end{align}

\noi
where $\bz^\o$ is as in \eqref{z}.
In particular, $v_N : = \bv_N|_{[0, T]\times \T^4}$ is the 
smooth global solution
to the following perturbed NLW on $\T^4$:
\begin{align}
\begin{cases}
 \dt^2 v_N - \Dl v_N + (v_N+z_{\text{per}, N})^3 = 0,\\
 (v_N, \dt v_N) |_{t = 0} = (0, 0).
\end{cases}
\label{NLW8}
\end{align}

By Proposition \ref{PROP:Str}, we have 
the following probabilistic estimate on $\bz^\o - \bz^\o_N$.
\begin{lemma}\label{LEM:AStr}
Let $T>0$ and $N \in \mathbb N$.
Given $1\leq q< \infty$, $2\leq r \leq  \infty$,
there exist $C,c>0$ such that
\begin{align*}
P\Big(\|\bz^\o - \bz_N^\o& \|_{L^q_t([0, T]; L^r_x(\R^d))}>\ld\Big) 
 \leq C\exp\Bigg(-c\frac{\ld^2}{\jb{T}^{d+2+\frac{2}{q}}
\|\P_{>N} (u_0, u_1)\|_{\mathcal{H}^s(\T^d)}^2}
\Bigg), 
\end{align*}

\noi
provided \textup{(i)} $ s= 0$ if $r < \infty$
and \textup{(i)} $ s> 0$ if $r = \infty$.

\end{lemma}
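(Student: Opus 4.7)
The plan is to observe that $\bz^\o - \bz_N^\o$ is itself the linear wave evolution of a \emph{randomized} pair on $\R^d$, corresponding to the high-frequency truncation of the original data on $\T^d$, and then invoke Proposition \ref{PROP:Str} with that truncation.

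More precisely, first I would write
\[
\bu_{j,T}^\o - \bu_{j,N,T}^\o = \et \big( u_j^\o - u_{j,N}^\o\big) = \et \mathbf{P}_{>N} u_j^\o, \qquad j = 0,1,
\]
using linearity of the multiplication by $\et$ and the definitions \eqref{R3} and \eqref{cutoffdata}. Next, I would note that, directly from \eqref{R1},
\[
\mathbf{P}_{>N} u_j^\o = \sum_{|n|>N} g_{n,j}(\o) \ft u_j(n) e^{in\cdot x}
\]
is exactly the randomization (in the sense of \eqref{R1}, with the \emph{same} sequence $\{g_{n,j}\}$, which still satisfies \eqref{cond}) of the periodic function $\mathbf{P}_{>N} u_j \in H^s(\T^d)$. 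Consequently, $\et \mathbf{P}_{>N} u_j^\o$ is precisely the random function on $\R^d$ defined by \eqref{R3} associated with the pair $(\mathbf{P}_{>N} u_0, \mathbf{P}_{>N} u_1) \in \H^s(\T^d)$.

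Then, by linearity of the propagator $S(t)$,
\[
\bz^\o - \bz_N^\o = S(t)\big(\et \mathbf{P}_{>N} u_0^\o, \et \mathbf{P}_{>N} u_1^\o\big),
\]
so applying Proposition \ref{PROP:Str} on the interval $I = [0,T]$ (hence with $b = T$ and $|I|^{2/q} = T^{2/q}$) to the pair $(\mathbf{P}_{>N} u_0, \mathbf{P}_{>N} u_1)$ yields
\[
P\Big(\|\bz^\o - \bz_N^\o\|_{L^q_t([0,T];L^r_x(\R^d))} > \ld\Big) \leq C \exp\!\Bigg(-c \frac{\ld^2}{\max(1,T^2)\jb{T}^d T^{2/q} \|\mathbf{P}_{>N}(u_0,u_1)\|_{\mathcal{H}^s(\T^d)}^2}\Bigg).
\]
Since $\max(1,T^2)\, T^{2/q} \lesssim \jb{T}^{2+2/q}$, the denominator is dominated by $\jb{T}^{d+2+2/q} \|\mathbf{P}_{>N}(u_0,u_1)\|_{\mathcal{H}^s(\T^d)}^2$, giving the stated bound.

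The case distinction on $s$ (with $s=0$ for $r<\infty$ and $s>0$ for $r=\infty$) is inherited verbatim from the hypotheses of Proposition \ref{PROP:Str}. There is no real obstacle here; the main point to verify carefully is that the high-frequency truncation $\mathbf{P}_{>N}$ on $\T^d$ commutes with the randomization procedure (so that the randomized tail is the tail of the randomized function), which is immediate from the Fourier-side definition \eqref{R1}. No new probabilistic input beyond Proposition \ref{PROP:Str} is required.
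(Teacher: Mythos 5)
Your proof is correct and is exactly the route the paper intends: the paper gives no separate argument for Lemma \ref{LEM:AStr} beyond noting that it follows from Proposition \ref{PROP:Str}, and the key point you verify --- that $\P_{>N}$ on $\T^d$ commutes with the randomization, so $\bz^\o-\bz_N^\o$ is the linear evolution of the randomization \eqref{R3} of $(\P_{>N}u_0,\P_{>N}u_1)$ --- together with $\max(1,T^2)\,T^{2/q}\lesssim \jb{T}^{2+2/q}$ gives the stated bound. One small slip: the relevant definition of $\bu_{j,N,T}^\o$ is $\et\,\P_{\leq N}u_j^\o$ from Appendix \ref{SEC:APP}, not \eqref{cutoffdata} (which is $\P_{\leq N}(\et u_j^\o)$ and is explicitly a different object); your displayed identity uses the correct one, so only the citation needs fixing.
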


Noting that 
$\|\P_{>N} (u_0, u_1)\|_{\mathcal{H}^s(\T^4)} \to 0$ as $N \to \infty$, 
given $k \in \mathbb{N}$, there exists $N_k \in \mathbb N$
and $\wt \O_k \subset \O$ with 
$P(\wt \O_k^c) < \frac{\eps}{2^{k+1}}$
such that 
for all $\o\in \wt \O_k$, we have
\begin{align}
\sup_{(q, r) \in \mathcal A} \|\bz^\o - \bz_{N_k}^\o \|_{L^q_T L^r_x}
\leq \frac{1}{k}, 
\label{A4}
\end{align}

\noi
where $\mathcal A =\{ (3, 6), (1, \infty)\}$.
Now, define $\O_{T, \eps}$
by 
\[\O_{T, \eps} =  \wt \O_{T, \frac 12 \eps} \cap \bigg(\bigcap_{k = 1}^\infty \wt \O_k\bigg).\]

\noi
Then, we have $P(\O_{T, \eps}^c) < \eps$.
Recall that $\Omega_{T,\eps}\subset \wt \O_{T,\frac{\eps}{2}}\subset \O_1$,
where $\O_1$ was defined in the proof of Proposition \ref{PROP:GWP2} in Subsection \ref{SUBSEC:HIGH}
such that \eqref{zbdd} and \eqref{Q4} hold for all $\o\in\O_1$. 
Then, by repeating the proof of Lemma \ref{prop:energy} with \eqref{A4}, 
there exists $k_0 \in \mathbb N$ such that 
\begin{align*}
\sup_{t \in [0, T]}\| (\bv_{N_k}^\o  (t), \dt \bv_{N_k}^\o(t)) \|_{ \mathcal{H}^1(\R^4)} 
\leq 2 C_0(T, \eps,  \|(u_0, u_1)\|_{{\mathcal{H}^s(\T^4)}}) < \infty,
\end{align*}

\noi
 for all $\o \in \O_{T, \eps}$
 and all $k \geq k_0$.
Moreover, it follows from \eqref{Stricbv}, \eqref{A3}, 
and the fact that $\Omega_{T,\eps}\subset \wt \O_{T,\frac{\eps}{2}}\subset \O_2$
with $\O_2$ defined in \eqref{Q4a},
that there exists $k_1 \in \mathbb{N}$ such that 
\begin{align}
\| \bv^\o \|_{L^3_TL^6_x(\R^4)}, 
\| \bv_{N_k}^\o \|_{L^3_TL^6_x(\R^4)}
\leq 
C_1(T, \eps,  \|(u_0, u_1)\|_{{\mathcal{H}^s(\T^4)}}) < \infty,
\label{A6}
\end{align}

\noi
 for all $\o \in \O_{T, \eps}$
 and all $k \geq k_1$.

 In the following, we fix  $\o \in \O_{T, \eps}$.
Given an interval $I$, let $X(I) = \{({\bf w},\dt {\bf w}): 
({\bf w},\dt {\bf w})\in  C_I \mathcal{\dot H}^1_x(\R^4), \ 
{\bf w}\in L^3_{I} L^6_x (\R^4) 
\}$.
By Monotone Convergence Theorem with \eqref{A6}, 
we can further subdivide the intervals $I_j$ in \eqref{Q4a}
and relabel them such that
\begin{align}
\| \bv^\o \|_{L^3_{I_j} L^6_x} \leq \g
\label{A7}
\end{align}

\noi
for some sufficiently small $\g > 0$, 
where
 $[0, T] = \bigcup_{j = 0}^{J}  I_j$
 with  $I_j =  [t_j, t_{j+1}]$, $t_0 = 0 < t_1 < \cdots < t_J = T$, 
 and $J < \infty$.
Moreover, it follows from   \eqref{Q4a} and \eqref{A4} that 
there exists $k_2\in \mathbb N$ such that 
\begin{align}
\| \bz^\o \|_{L^3_{I_j} L^6_x}, \| \bz^\o_{N_k} \|_{L^3_{I_j} L^6_x} \leq \g \ll 1, 
\label{A8}
\end{align}

\noi
for all $k \geq k_2$.

Let $k \geq \max(k_0, k_1, k_2)$.
By Monotone Convergence Theorem with \eqref{A6}, we have 
\begin{align}
\| \bv_{N_k}^\o \|_{L^3_t([0, \dl]; L^6_x)}
\leq 4 \g \ll1
\label{A9}
\end{align}

\noi
for some small $\dl  = \dl(k, \o) > 0$ with $[0, \dl] \subset I_0$.
Then, by Lemma \ref{LEM:Strichartz} with \eqref{A7}, \eqref{A8}, and \eqref{A9}, 
we have
\begin{align}
\| \bv^\o - \bv_{N_k}^\o \|_{X([0, \dl])}
\leq \tfrac 12 
\| \bv^\o - \bv_{N_k}^\o \|_{L^3_t([0, \dl]; L^6_x)}
+ 
\tfrac 12 \| \bz^\o - \bz_{N_k}^\o \|_{L^3_t([0, \dl]; L^6_x)}.
\label{A10}
\end{align}
	
\noi
It follows from \eqref{A4}, \eqref{A7}, and \eqref{A10}
that there exists $K_0\geq  \max(k_0, k_1, k_2)$ such that
\begin{align*}
\| \bv_{N_k}^\o \|_{L^3_t([0, \dl]; L^6_x)}
\leq 2 \g 
\end{align*}

\noi
for all $k \geq K_0$.
Then, a continuity argument with  \eqref{A4}, \eqref{A7}, \eqref{A8}, and \eqref{A10}
yields
\begin{align}
\| \bv_{N_k}^\o \|_{L^3_{I_0} L^6_x}\leq 2 \g 
\qquad \text{and}\qquad 
\| \bv^\o - \bv_{N_k}^\o \|_{X({I_0})}
\leq  \| \bz^\o - \bz_{N_k}^\o \|_{L^3_{I_0} L^6_x}
\label{A11}
\end{align}

\noi
for all $k \geq K_0$.

Once again, by Monotone Convergence Theorem with \eqref{A6}, we have 
\begin{align}
\| \bv_{N_k}^\o \|_{L^3_t([t_1 , t_1+ \dl]; L^6_x)}
\leq 4 \g \ll1
\label{A12}
\end{align}

\noi
for some small $\dl  = \dl(k, \o) > 0$ with $[t_1 , t_1 + \dl] \subset I_1$.
By Lemma \ref{LEM:Strichartz} with \eqref{A7}, \eqref{A8}, 
and \eqref{A12}, 
we have
\begin{align}
\| \bv^\o - \bv_{N_k}^\o \|_{X([t_1, t_1+\dl])}
& \leq 
 C \| \bv^\o(t_1) - \bv_{N_k}^\o(t_1) \|_{\mathcal{\dot H}^1} 
+ 
\tfrac 12 
\| \bv^\o - \bv_{N_k}^\o \|_{L^3_t([t_1, t_1+ \dl]; L^6_x)}  \notag \\
& \hphantom{X|}
 + 
\tfrac 12 \| \bz^\o - \bz_{N_k}^\o \|_{L^3_t([t_1, t_1+ \dl]; L^6_x)}.
\label{A13}
\end{align}

\noi
Hence, by \eqref{A11} and \eqref{A13}, we have 
\begin{align}
\| \bv^\o - \bv_{N_k}^\o \|_{X([t_1, t_1+\dl])}
& \leq 
2 C  \| \bz^\o - \bz_{N_k}^\o \|_{L^3_{I_0} L^6_x}
 + 
 \| \bz^\o - \bz_{N_k}^\o \|_{L^3_t([t_1, t_1+ \dl]; L^6_x)}.
\label{A14}
\end{align}

\noi
Applying the continuity argument again
with  \eqref{A4}, \eqref{A7}, \eqref{A8}, and \eqref{A14}, 
it follows that there exists $K_1 \geq K_0$ such that 
\begin{align*}
\| \bv_{N_k}^\o \|_{L^3_{I_1} L^6_x}\leq 2 \g 
\qquad \text{and}\qquad 
\| \bv^\o - \bv_{N_k}^\o \|_{X({I_1})}
\leq  (2C+1) \| \bz^\o - \bz_{N_k}^\o \|_{L^3_T L^6_x}
\end{align*}

\noi
for all $k \geq K_1$.

By arguing inductively, we conclude that 
there exists $K_J\in \mathbb N$ such that 
\begin{align*}
\| \bv^\o - \bv_{N_k}^\o \|_{X([0, T])}
\leq  C_T  \| \bz^\o - \bz_{N_k}^\o \|_{L^3_T L^6_x}
< \frac{C_T}k
\end{align*}

\noi
for all $k \geq K_J$.
In particular, 
$v_{N_k}^\o  = \bv_{N_k}^\o|_{[0, T]\times \T^4}$ 
converges to 
$v^\o : = \bv^\o|_{[0, T]\times \T^4}$

\noi
in 
\begin{align}
 L^3_t([0, T];  L^6_x (\T^4))\cap C([0, T]; \dot H^1_x(\T^4)). 
\label{A16}
\end{align} 

\noi
Moreover,  $\partial_t v_{N_k}^\o$ 
converges to 
$\partial_t v^\o$
in $C([0, T]; L^2_x(\T^4))$. 
It follows from \eqref{A3},  \eqref{A16}, and the fact that
$\bv^\o$ satisfies \eqref{NLW4} on $[0, T] \times \R^4$,
that $v^\o$ is a distributional solution
to the following perturbed NLW on $\T^4$:
\begin{align*}
\begin{cases}
 \dt^2 v^\o - \Dl v^\o + (v^\o+z_{\text{per}}^\o)^3 = 0\\
 (v^\o, \dt v^\o) |_{t = 0} = (0, 0), 
\end{cases}
\qquad (t, x) \in [0, T]\times \T^4.
\end{align*}

\noi
Moreover, 
$v^\o$ satisfies the following Duhamel formulation:
\[ v^\o(t) = -  \int_0^t S_\text{per}(t-t') (v^\o(t') + z_\text{per}^\o(t'))^3 dt'\]

\noi
for $t \in [0, T]$.
This can be seen from the fact that 
$v^\o_{N_k}$ satisfies the corresponding Duhamel formulation for \eqref{NLW8}, 
the convergence of $v^\o_{N_k}$ to $v^\o$ in \eqref{A16},
and the convergence of $z_{\text{per}, {N_k}}$
to $z_{\text{per}}$ given by \eqref{A3} and \eqref{A4}.
Therefore, $u^\o :=  z^\o_\text{per} + v^\o $ is a solution to 
\eqref{NLW} on $[0, T]\times \T^4$
in the class \eqref{class1}.
This shows 
how Proposition \ref{PROP:GWP} follows from  Proposition \ref{PROP:GWP2}.

\begin{remark}\rm
In the above argument,  we only controlled the homogeneous $\dot H^1$-norm of
$v^\o$ for simplicity.
One can easily control 
 the nonhomogeneous $H^1$-norm of $v^\o$
by estimating the $L^2$-norm of $v^\o$
from the control on the $L^2$-norm of $\dt v^\o$ and Cauchy-Schwarz inequality (in time).
Since this is standard, we omit details.
\end{remark}

\section{On  uniqueness}\label{SEC:uniq}

We briefly discuss the issue on uniqueness mentioned in Remark \ref{REM:uniq}.
It follows from the proof of Theorem \ref{THM:GWP} that 
the set $\O_{(u_0, u_1)} $ 
can be written as $\O_{(u_0, u_1)} =\bigcup_{\eps>0}\O_\eps$
with $P(\O_\eps^c)<\eps$
such that
(i) there exists a global solution $u^\o$ to \eqref{NLW}
and (ii)  given any $T>0$, 
we have 
\begin{equation}
\|f^\omega\|_{L^{\frac{d+2}{d-2}}([0, T]; L^{\frac{2(d+2)}{d-2}}(\T^d))}
\leq C_1(T) < \infty,
\label{B1}
\end{equation}

\noi
for all $ \o \in \O_\eps$, where $f^\o := S_\textup{per}(\cdot)(u_0^\omega, u_1^\omega)$.
Now, we fix such $\o \in \O_\eps$ and suppress the dependence on $\o$ in the following.
Letting $v = u - f$, we see that $v$ is a global solution to 
the perturbed NLW  on $\T^d$:
\begin{align}
\begin{cases}
\pa_{t}^2v-\Delta v+F(v+f)=0\\
(v, \dt v)|_{t = 0} = (0, 0).
\end{cases}
\label{B}
\end{align}

Suppose that  $v_1, v_2 
\in X(\R)$
are two global solutions to \eqref{B},
where $X(\R)$ is as in \eqref{XX}.
Then, for each  $T > 0$, we have  
\begin{equation}
\|v_j\|_{L^{\frac{d+2}{d-2}}([0, T]; L^{\frac{2(d+2)}{d-2}}(\T^d))}
\leq C_2(T) < \infty,\quad j = 1, 2.
\label{B3}
\end{equation}

\noi
In view of \eqref{B1} and \eqref{B3}, 
we can write $[0, T] = \bigcup_{j = 0}^J I_j$
with  $I_j =  [t_j, t_{j+1}]$, $t_0 = 0 < t_1 < \cdots < t_J = T$,  and $J < \infty$ such that 
\begin{equation}
\|f^\omega\|_{L^{\frac{d+2}{d-2}}_{I_j}L_x^{\frac{2(d+2)}{d-2}}}
+ \sum_{j = 1}^2  \|v_j\|_{L^{\frac{d+2}{d-2}}_{I_j}L_x^{\frac{2(d+2)}{d-2}}}
\leq \g \ll 1.
\label{B4}
\end{equation}

Given a finite  interval $I$, let 
$X(I) = \{(w,\dt w): 
(w, \dt w) \in  C_I \mathcal{\dot H}^1_x(\T^d), \ 
 w\in
L^{\frac{d+2}{d-2}}_{I}L_x^{\frac{2(d+2)}{d-2}}(\T^d)
\}$.
Then, by a standard deterministic local-in-time analysis with Lemma \ref{LEM:Strichartz}
and \eqref{B4}, 
we obtain 
\begin{align*}
\| v_1 - v_2\|_{X(I_0) }
\leq C(\g)\| v_1 - v_2\|_{L^{\frac{d+2}{d-2}}_{I_0}L_x^{\frac{2(d+2)}{d-2}} }
\leq \frac 12 \| v_1 - v_2\|_{L^{\frac{d+2}{d-2}}_{I_0}L_x^{\frac{2(d+2)}{d-2}} }. 
\end{align*}

\noi
Therefore, we conclude that 
$ v_1 =  v_2$ on $I_0$.
In particular, we have $v_1(t_1) = v_2(t_1)$.
Thus,  we can iterate the above argument
and conclude that
$ v_1 =  v_2$ on $I_j$, $j = 1, 2, \dots, J$.
Namely, $v_1 = v_2$ on $[0, T]$.
Since the choice of $T$ was arbitrary, 
we conclude that $v_1 = v_2 $ on $[0, \infty)$.
Clearly, the same argument works for negative times.

\smallskip

\begin{ackno}\rm
T.O. was supported by the European Research Council (grant no.~637995 ``ProbDynDispEq'').
The authors would like to thank Pieter Blue 
for helpful discussions.
\end{ackno}



\end{document}